\newtheorem{theorem}{Theorem}[section]
\newtheorem{example}[theorem]{Example}
\newtheorem{lemma}[theorem]{Lemma}
\newtheorem{proposition}[theorem]{Proposition}
\newtheorem{observation}[theorem]{Observation}
\newcommand\DELETE[1]{}
\begin{document}


\title{{\bf On fractional version of oriented coloring\footnote{This work is partially supported by the IFCAM project ``Applications of graph homomorphisms'' (MA/IFCAM/18/39).}}}
\author{
{\sc Sandip Das}$\,^{a}$, {\sc Soham Das}$\,^{a}$, {\sc Swathy Prabhu}$\,^{b}$, {\sc Sagnik Sen}$\,^{c}$ \\
\mbox{}\\
{\small $(a)$ Indian Statistical Institute, Kolkata, India}\\
{\small $(b)$ Ramakrishna Mission Vivekananda Educational and Research Institute, Kolkata, India}\\
{\small $(c)$ Indian Institute of Technology  Dharwad, Dharwad, India}
}

\date{\today}

\maketitle

\begin{abstract}
We introduce  the fractional version of oriented coloring and initiate its study. We prove some basic results and study the parameter for directed cycles and sparse planar graphs. In particular, we show that 
for every $\epsilon > 0$, there exists an integer $g_{\epsilon} \geq 12$ 
such that any oriented planar graph having girth at least $g_{\epsilon}$ has 
fractional oriented chromatic number at most $4+\epsilon$. 
Whereas, it is known that there exists 
an oriented planar graph having girth at least $g_{\epsilon}$ with oriented chromatic number equal to $5$. We also study the fractional oriented chromatic number
of  directed cycles and provide its exact value. 
Interestingly, the result 
depends on the prime divisors of the length of the directed cycle. 
\end{abstract}

\noindent \textbf{Keywords:} oriented coloring, fractional coloring, planar graphs, 
directed cycles.

\section{Introduction}
``It is possible to go to a graph theory conference and to ask oneself, at the end of every talk,
what is the fractional analogue?'' - Scheinermann and Ullman~\cite{scheinermann2008fgt} made this remark in the preface of their book on fractional graph theory. Considering the popularity of studying fractional versions of well-studied problems, it is a wonder that the fractional version of oriented coloring is yet to be studied. We initiate it with this article.

An \textit{oriented graph} $G$ is a directed graph without any directed 
cycle of length 1 or 2.
In this article, a graph $G$ refers to a simple or an oriented graph while $V(G)$ denotes the set of vertices of $G$ and $E(G)$ or $A(G)$ refers to its set of 
edges or arcs, respectively.

The notion of oriented coloring was introduced by Bruno 
Courcelle~\cite{courcelle1994monadic} in 1994
inspiring a considerable volume of work on 
the topic (see recent survey~\cite{sopena-updated-survey} for details). 
An \textit{oriented $k$-coloring} of an oriented graph $G$ is a function $f$ from $V(G)$ to a set of $k$ colors such that 
(i) $f(x) \neq f(y)$ for every arc $xy \in A(G)$, and 
(ii) $f(y) = f(z)$ implies $f(x) \neq f(w)$ for every $xy,zw \in A(G)$.
The \textit{oriented chromatic number} $\chi_o(G)$ is the minimium $k$ such that 
$G$ admits an oriented $k$-coloring. 
The oriented chromatic number for a family 
$\mathcal{F}$ of oriented graphs is given by 
$$\chi_o(\mathcal{F}) = \max\{\chi_o(G)| G \in \mathcal{F}\}.$$

Without further ado, let us now define the natural analogue of the fractional version of 
the oriented chromatic number.
Let $S$ be a set of $k$ colors and let $P_b(S)$ denote the set of all subsets of $S$ having cardinality $b$. 
A \textit{$b$-fold oriented $k$-coloring} is a   
mapping $f$ from $V(G)$ to $P_b(S)$ satisfying 
$(i)$ $f(x) \cap f(y) = \emptyset $ for every arc  $xy \in A(G)$, and 
$(ii)$  $f(x) \cap f(w) \neq \emptyset $ implies 
$f(y) \cap f(z) = \emptyset $ for every $xy, zw \in A(G)$.   
The \textit{$b$-fold oriented chromatic number} $\chi^b_o(G)$  of $G$ is the minimum $k$
such that $G$ admits an $b$-fold oriented $k$-coloring.

The \textit{fractional oriented chromatic number} of $G$ is given by
$$\chi^*_o(G) = \inf_{b \rightarrow \infty} 
\frac{\chi^b_{o}(G)}{b}.$$
Observe that $\chi^{a+b}_o(G) \leq \chi^a_o(G) + \chi^b_o(G)$ for all $a,b \in \mathbb{N}$. Thus the above limit exists due to 
the 
Subadditivity Lemma (see Appendix A.4~\cite{scheinermann2008fgt}).
Naturally, for a  family 
$\mathcal{F}$ of oriented graphs 
$$\chi^*_o(\mathcal{F}) = \max\{\chi^*_o(G)| G \in \mathcal{F}\}.$$
Notice that  the oriented $k$-coloring and chromatic number $\chi_o(\cdot)$  are equivalent to the  $1$-fold oriented $k$-coloring and chromatic number 
$\chi^1_o(\cdot)$, respectively.

The close relation between oriented chromatic number and 
(oriented) graph homomorphisms is well-known~\cite{sopena-updated-survey}. On the other hand, the fractional version of the usual chromatic number has a famous equivalent formulation using homomorphism to Kneser graphs~\cite{scheinermann2008fgt}. Thus, naturally we study the relation between the oriented fractional coloring and oriented graph homomorphisms. Alongside such a study, we 
explore some other basic properties of oriented fractional coloring.

A relevant related concept is the oriented analogues of clique and clique number. 
The analogue of clique and clique number   for oriented graphs ramified into two notions: 
(i) oriented absolute clique and clique number, and (ii) oriented relative clique and clique number. The later is more relevant here. 
An \textit{oriented relative clique}~\cite{nandySS2016} of an oriented graph $G$
is a vertex subset  $R \subseteq V(G)$ satisfying $f(x) \neq f(y)$ under any homomorphism $f$ of $G$, and for any distinct vertices $x,y \in R$. 
The  \textit{oriented relative clique number}~\cite{nandySS2016} $\omega_{ro}(G)$ of $G$ is 
the cardinality of a largest oriented relative clique. The oriented relative clique number 
$\omega_{ro}(\mathcal{F})$ for a family $\mathcal{F}$ of oriented graphs is the maximum 
$\omega_{ro}(G)$ where $g \in \mathcal{F}$. 

We will soon see that the parameter $\chi^*_o(\cdot)$ is sandwiched between the parameters oriented relative clique number (lower bound) and 
oriented chromatic number (upper bound). 
Therefore, it is interesting to examine the oriented fractional chromatic number of oriented graphs or 
families of oriented graphs for which the values of $\omega_{ro}$ and $\chi_o$ are different. 

The most ordinary such graphs one can think of are probably the directed cycles $C_r$ of length $r$. 
The study of fractional oriented chromatic number of these simple looking graphs turn out to be quite challenging. Moreover,  the corresponding results are utterly surprising and have 
an interesting relation with prime numbers. 
To motivate the readers, we present an interesting example where the fractional oriented chromatic number is strictly less than the oriented chromatic number. 

\begin{example}\label{example 7-cycle}
Let $C_7$ be the directed 7-cycle. We know that $\chi_o(C_7) =  4$. 
However,  Fig.~\ref{fig ex1} shows a 
$2$-fold oriented $7$-coloring of $C_7$ implying
$\chi^*_o(C_7) \leq \frac{7}{2} = 3.5 < 4 = \chi_o(C_7)$.  Is this upper bound tight? We will answer that later in this article, and till then we encourage the readers to think about it. 
\end{example}

\begin{figure}
\centering
	\begin{tikzpicture}[inner sep=.7mm]
	
	\foreach \a in {0,...,6} 
	{
		\node[draw, circle, line width=1pt](v\a) at (\a*360/7:3cm){$x_\a$};	
	}

	\node at (0:3.3cm)[right]{$\{1,2\}$};	
	\node at (360/7:3.3cm)[above]{$\{3,4\}$};	
	\node at (2*360/7:3.4cm)[left]{$\{5,6\}$};	
	\node at (3*360/7:3.4cm)[left]{$\{7,1\}$};
	\node at (4*360/7:3.3cm)[left]{$\{2,3\}$};
	\node at (5*360/7:3.3cm)[below]{$\{4,5\}$};
	\node at (6*360/7:3.35cm)[right]{$\{6,7\}$};

	\draw[-latex,line width=1pt,black] (v0) -- (v1);
	\draw[-latex,line width=1pt,black] (v1) -- (v2);
	\draw[-latex,line width=1pt,black] (v2) -- (v3);
	\draw[-latex,line width=1pt,black] (v3) -- (v4);
	\draw[-latex,line width=1pt,black] (v4) -- (v5);
	\draw[-latex,line width=1pt,black] (v5) -- (v6);
	\draw[-latex,line width=1pt,black] (v6) -- (v0);

 	\end{tikzpicture}
 	\caption{Fractional  oriented  coloring of the directed 7-cycle $C_7$.}
 	\label{fig ex1}
 \end{figure}
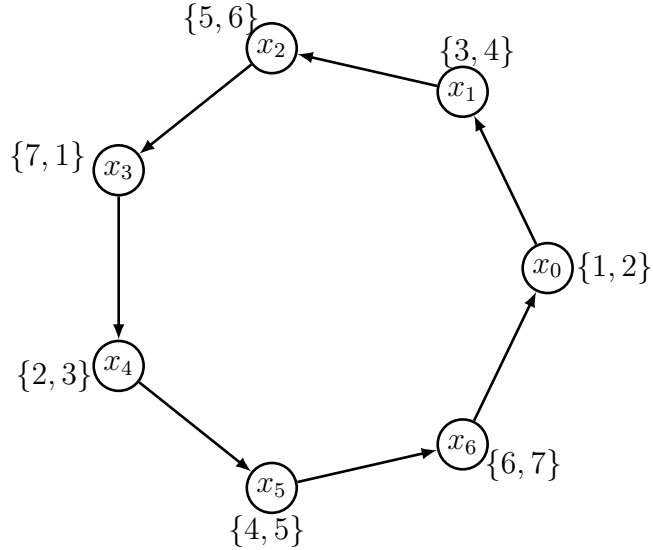

One of the most important open problems in the domain of oriented coloring is determining the oriented chromatic number of the family $\mathcal{P}_3$ of oriented planar graphs. In particular, improving the upper bound  
$\chi_o(\mathcal{P}_3) \leq 80$~\cite{raspaud1994planar80} seems to be especially challenging.  
Moreover, the related class of questions of determining the oriented chormatic number 
$\chi_o(\mathcal{P}_g)$  of the family $\mathcal{P}_g$ of oriented planar graphs with \textit{girth} 
(length of a smallest cycle) at least $g$  are also interesting.   
However, till date 
the only known exact bounds are for the cases 
$\chi_o(\mathcal{P}_g) = 5$ for all $g \geq 12$.

In general, finding the exact value of $\chi_o(\mathcal{P}_g)$ for $g \geq 3$ seems to be tough problem. Therefore, we wondered if the fractional version will be any easier or if the exact values of the parameter will remain the same. For this particular article, we focus more on the second question and find that indeed the fractional oriented chromatic number of $\mathcal{P}_g$ can be less than $5$ for large values of $g$, and, in fact, it gets arbitrarily close to $4$ as $g \to \infty$.

The organization of this article as follows. In Section~\ref{sec pre}, we present the preliminary notation, terminology and establish some basic results. In Sections~\ref{sec directed cycle} and~\ref{sec planar} we study the fractional oriented chromatic number of directed graphs and sparse planar graphs, respectively. Finally, in Section~\ref{sec conclusions}, we share our concluding remarks. 

\section{Preliminaries and basic results}\label{sec pre}
Let $G$ and $H$ be two oriented  graphs. 
A function 
$f: V(G) \to V(H)$ is a \textit{homomorphism} of $G$ to $H$ 
if for each arc $xy$ of $G$, $f(x)f(y)$ 
is an arc of $H$. We use the notation $G \to H$ to denote that $G$ admits a homomorphism to $H$. 
This definition readily motivates the following result. 

\begin{proposition}
If $G \to H$, then $\chi_o^{*}(G)  \leq \chi_o^{*}(H)$. 
\end{proposition}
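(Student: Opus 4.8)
The plan is to pull back an optimal fractional coloring of $H$ along the homomorphism. Fix an integer $b$ and let $k = \chi^b_o(H)$, so that $H$ admits a $b$-fold oriented $k$-coloring $c : V(H) \to P_b(S)$ for some set $S$ of $k$ colors. Let $\phi : V(G) \to V(H)$ be a homomorphism witnessing $G \to H$. I would define the candidate coloring $f := c \circ \phi : V(G) \to P_b(S)$ and argue that it is a $b$-fold oriented $k$-coloring of $G$; this immediately yields $\chi^b_o(G) \le k = \chi^b_o(H)$.

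First I would verify condition $(i)$. For any arc $xy \in A(G)$, the homomorphism property guarantees that $\phi(x)\phi(y)$ is an arc of $H$; applying condition $(i)$ for $c$ to this arc gives $f(x) \cap f(y) = c(\phi(x)) \cap c(\phi(y)) = \emptyset$, as required.

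Next I would verify condition $(ii)$, which is the step demanding the most care. Take arcs $xy, zw \in A(G)$ with $f(x) \cap f(w) \neq \emptyset$. By the homomorphism property, $\phi(x)\phi(y)$ and $\phi(z)\phi(w)$ are both arcs of $H$. Since $f(x) \cap f(w) = c(\phi(x)) \cap c(\phi(w)) \neq \emptyset$, condition $(ii)$ for $c$ applied to this pair of arcs forces $c(\phi(y)) \cap c(\phi(z)) = \emptyset$, that is, $f(y) \cap f(z) = \emptyset$. The only thing to watch here is the bookkeeping: one must match the four vertices $x,y,z,w$ to their correct roles in the two arcs of $H$ so that the nonempty-intersection hypothesis and the forced empty-intersection conclusion line up exactly as in the definition of condition $(ii)$.

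Finally, having established $\chi^b_o(G) \le \chi^b_o(H)$ for every $b \in \mathbb{N}$, I would divide by $b$ and pass to the limit, using that both limits defining $\chi^*_o$ exist by the Subadditivity Lemma, to conclude $\chi^*_o(G) = \lim_{b\to\infty} \chi^b_o(G)/b \le \lim_{b\to\infty} \chi^b_o(H)/b = \chi^*_o(H)$. I do not anticipate any genuine obstacle: the argument is the expected composition (pullback) of colorings, and the sole subtle point is the careful index-matching in condition $(ii)$ highlighted above.
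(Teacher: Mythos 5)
Your proposal is correct and takes essentially the same route as the paper: pull back a $b$-fold oriented coloring of $H$ along the homomorphism $\phi$ and check that the composition satisfies both conditions. Your version is in fact slightly more careful than the paper's, since you establish $\chi^b_o(G)\leq\chi^b_o(H)$ for \emph{every} $b$ and then pass to the limit, whereas the paper tacitly assumes the infimum defining $\chi^*_o(H)$ is attained at some particular $b$.
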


\begin{proof}
Let $\chi_o^{*}(H) = \frac{\chi^b_{o}(H)}{b}$, and thus there exists a $b$-fold oriented $\chi^b_{o}(H)$-coloring $f$ of $H$. Assume that $\phi: G \to H$ is a homomorphism. Notice  that the composition 
$f \circ \phi$ is a 
$b$-fold oriented $\chi^b_{o}(H)$-coloring of $G$.
\end{proof}

Now the next natural question is whether there exists any equivalent definition of 
oriented fractional coloring and chromatic number using the notion of homomorphisms. 
To express such an equivalent formulation we need some definitions.

The \textit{Kneser graph $KG_{a,b}$} is a graph having the subsets of cardinality $b$ of a set of cardinality $a$ as vertices, and two vertices are adjacent if and only if they are disjoint sets. 
A \textit{consistent sub-orientation} of $KG_{a,b}$ is an oriented graph $\overrightarrow{KG}_{a,b}$, whose underlying graph is a subgraph of $KG_{a,b}$, and whose arcs are oriented in such a way that
given two arcs $xy$ and $wz$ we have $x \cap z \neq \emptyset \implies y \cap w = \emptyset$. 
That brings us to the equivalent formulation of oriented fractional coloring using the notion of homomorphisms.

\begin{theorem}\label{th kneser gen}
An oriented graph $G$ satisfies $\chi_o^{*}(G) \leq \frac{a}{b}$ if and only if 
$G \to \overrightarrow{KG}_{ac,bc}$ where $\overrightarrow{KG}_{ac,bc}$ is a consistent sub-orientation of 
$KG_{ac,bc}$ and $c$ is a constant positive integer. 
\end{theorem}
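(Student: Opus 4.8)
The plan is to reduce the statement to a dictionary between $b$-fold oriented colorings and homomorphisms into consistent sub-orientations of Kneser graphs, and then to move the target ratio $a/b$ around by a scaling argument. The core is the following lemma: for every $b$ and $k$, the graph $G$ admits a $b$-fold oriented $k$-coloring if and only if $G \to \overrightarrow{KG}_{k,b}$ for some consistent sub-orientation $\overrightarrow{KG}_{k,b}$ of $KG_{k,b}$; equivalently, $\chi_o^b(G)\le k$ exactly when such a homomorphism exists. To prove it I identify a $k$-element color set $S$ with the ground set of $KG_{k,b}$, so that each value $f(v)\in P_b(S)$ is a vertex of $KG_{k,b}$. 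Given a $b$-fold oriented $k$-coloring $f$, I take the arc set $\{f(x)f(y): xy\in A(G)\}$: condition $(i)$ makes each $f(x)f(y)$ an edge of $KG_{k,b}$, and condition $(ii)$ does double duty. It rules out a reversed pair $f(x)f(y),f(y)f(x)$ arising from arcs $xy,zw$ with $f(z)=f(y)$ and $f(w)=f(x)$ (which would give $f(x)\cap f(w)=f(x)\neq\emptyset$ yet $f(y)\cap f(z)=f(y)\neq\emptyset$), so the orientation is well defined; and, read on an arbitrary pair of arcs, it is exactly the consistency condition $f(x)\cap f(w)\neq\emptyset\Rightarrow f(y)\cap f(z)=\emptyset$. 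Hence $f$ is a homomorphism into a consistent sub-orientation. Conversely, a homomorphism $\phi\colon G\to\overrightarrow{KG}_{k,b}$, viewed as a map $V(G)\to P_b(S)$, satisfies $(i)$ by Kneser-adjacency and $(ii)$ by consistency, so it is a $b$-fold oriented $k$-coloring.

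I then record two closure properties phrased through $\chi_o^b$. Inflating every color into $t$ fresh copies sends a $b$-fold oriented $k$-coloring to a $tb$-fold oriented $tk$-coloring, because this operation preserves whether each pairwise intersection $f(u)\cap f(v)$ is empty; and introducing unused colors shows that $\chi_o^b(G)\le k$ implies $\chi_o^b(G)\le k'$ for all $k'\ge k$. The backward direction of the theorem is now immediate: if $G\to\overrightarrow{KG}_{ac,bc}$ then $\chi_o^{bc}(G)\le ac$ by the lemma, so $\chi_o^*(G)=\inf_m \chi_o^m(G)/m\le \chi_o^{bc}(G)/(bc)\le a/b$.

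For the forward direction it suffices to exhibit a single fold $m$ with $\chi_o^m(G)/m\le a/b$: setting $c=m$, inflation by the factor $b$ turns an $m$-fold oriented $\chi_o^m(G)$-coloring into a $bm$-fold oriented $b\chi_o^m(G)$-coloring, and since $b\chi_o^m(G)\le am$ the padding step upgrades it to a $bm$-fold oriented $am$-coloring, giving $G\to\overrightarrow{KG}_{am,bm}=\overrightarrow{KG}_{ac,bc}$. The main obstacle is precisely producing this $m$. If $\chi_o^*(G)<a/b$ it comes for free, since then some term of the infimum already lies below $a/b$; the delicate case is the boundary $\chi_o^*(G)=a/b$, where one needs the infimum to be attained, say $\chi_o^*(G)=\chi_o^{b_0}(G)/b_0$. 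Subadditivity alone does not secure this, since an integer subadditive sequence may strictly decrease to its limit without ever reaching it, so I expect the real work to be proving that $\chi_o^*(G)$ is rational and attained at some finite fold. Because condition $(ii)$ couples the color classes, the independent-set linear program that yields attainment for the ordinary fractional chromatic number is not directly available here; I would instead try to recover attainment from the finiteness, for each fixed $k$, of the consistent sub-orientations of $KG_{k,b}$ together with a compactness and normalization argument, and then close the forward direction with the scaling above.
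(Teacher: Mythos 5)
Your dictionary between $b$-fold oriented $k$-colorings of $G$ and homomorphisms of $G$ into consistent sub-orientations of $KG_{k,b}$ is exactly the route the paper takes: its proof builds the target graph $C$ on the color classes of a coloring $f$ and observes that $f$ becomes a homomorphism into a consistent sub-orientation of $KG_{a',b'}$, and your inflation step (each color split into $t$ fresh copies) is precisely the paper's Theorem~\ref{th kneser sp}. Your handling of the backward direction, of padding with unused colors, and of the case $\chi_o^*(G)<\frac{a}{b}$ all match the paper as well, and your verification that condition $(ii)$ both forbids $2$-cycles in the image and yields consistency is correct.

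The step you leave open --- producing a finite fold $m$ with $\chi_o^m(G)/m\le \frac{a}{b}$ in the boundary case $\chi_o^*(G)=\frac{a}{b}$ --- is a genuine gap in your write-up, but you should know that the paper does not close it either: its forward direction opens with ``Let $\chi_o^{*}(G) \leq \frac{a}{b}$. That means, $G$ admits a $b'$-fold oriented $a'$-coloring $f$ for some $a', b'$ satisfying $\frac{a'}{b'} = \frac{a}{b}$,'' which silently assumes that the infimum defining $\chi_o^*$ is attained (at a fold commensurable with $b$). As you observe, the Subadditivity Lemma only guarantees that the limit exists, and the linear-programming argument that gives attainment and rationality for the ordinary fractional chromatic number is not directly available here because condition $(ii)$ couples pairs of color classes; indeed the paper lists finding an LP formulation as an open problem in its conclusions. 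So your diagnosis is accurate, but the compactness-and-normalization argument you gesture at is not carried out, and without it (or some substitute, e.g.\ a finite LP over homomorphic images of $G$) the ``only if'' direction is established only under strict inequality $\chi_o^*(G)<\frac{a}{b}$ --- in your proposal and in the paper alike.
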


To prove the above theorem, we will first prove a supporting result.

\begin{theorem}\label{th kneser sp}
If $G \to \overrightarrow{KG}_{a,b}$ for some consistent sub-orientation of 
$KG_{a,b}$, then there exists a consistent sub-orientation of 
$KG_{ac,bc}$ satisfying $G \to \overrightarrow{KG}_{ac,bc}$ for every positive integer $c$. 
\end{theorem}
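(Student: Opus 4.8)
The plan is to construct the desired consistent sub-orientation $\overrightarrow{KG}_{ac,bc}$ explicitly by \emph{inflating} the ground set of $KG_{a,b}$ by a factor of $c$. Let $[a]$ denote the ground set over which $KG_{a,b}$ is defined, and identify the ground set of $KG_{ac,bc}$ with $[a] \times [c]$. Define the map $\psi$ sending each $b$-subset $S \subseteq [a]$ to the $bc$-subset $\psi(S) = S \times [c]$. First I would record the single structural fact on which everything rests: for any two $b$-subsets $S, T$ of $[a]$ we have $\psi(S) \cap \psi(T) = (S \cap T) \times [c]$, and hence $S \cap T = \emptyset$ if and only if $\psi(S) \cap \psi(T) = \emptyset$. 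In other words, $\psi$ is an injection that preserves both disjointness and non-disjointness exactly.

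Next I would use $\psi$ to transport the oriented graph $\overrightarrow{KG}_{a,b}$ to a candidate oriented graph on $bc$-subsets of $[ac]$: take as vertices the sets $\psi(S)$ for $S \in V(\overrightarrow{KG}_{a,b})$, and as arcs the pairs $\psi(S)\psi(T)$ for each arc $ST$ of $\overrightarrow{KG}_{a,b}$. Two verifications are then needed. For the underlying-graph condition, if $ST$ is an arc of $\overrightarrow{KG}_{a,b}$ then $S$ and $T$ are disjoint, so by the structural fact $\psi(S)$ and $\psi(T)$ are disjoint and therefore adjacent in $KG_{ac,bc}$; thus the underlying graph is indeed a subgraph of $KG_{ac,bc}$. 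For consistency, consider two arcs $\psi(S)\psi(T)$ and $\psi(W)\psi(Z)$ and suppose $\psi(S) \cap \psi(Z) \neq \emptyset$; by the structural fact this forces $S \cap Z \neq \emptyset$, and since $\overrightarrow{KG}_{a,b}$ is consistent we obtain $T \cap W = \emptyset$, whence $\psi(T) \cap \psi(W) = \emptyset$ again by the structural fact. Hence the constructed oriented graph is a consistent sub-orientation of $KG_{ac,bc}$; call it $\overrightarrow{KG}_{ac,bc}$.

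Finally I would produce the homomorphism. Given the assumed homomorphism $\phi : G \to \overrightarrow{KG}_{a,b}$, the composition $\psi \circ \phi$ maps $V(G)$ into $V(\overrightarrow{KG}_{ac,bc})$, and for every arc $xy$ of $G$ the pair $\phi(x)\phi(y)$ is an arc of $\overrightarrow{KG}_{a,b}$, so $\psi(\phi(x))\psi(\phi(y))$ is an arc of $\overrightarrow{KG}_{ac,bc}$ by construction; thus $\psi \circ \phi$ is the required homomorphism $G \to \overrightarrow{KG}_{ac,bc}$.

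I do not expect a serious obstacle here: once the inflation map $\psi$ is chosen, every requirement reduces to the single observation that $\psi$ preserves intersection and non-intersection of sets. The only point deserving care is to confirm that the consistency condition---which mixes two different arcs and refers to intersections of endpoints across them---survives the transport, and this is exactly where the ``if and only if'' strength of the structural fact (not merely preservation of disjointness) is used.
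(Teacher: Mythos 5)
Your construction is exactly the paper's: the paper replaces each vertex $x=\{x_1,\dots,x_b\}$ by $\hat{x}=\{x_{11},\dots,x_{cb}\}$, which is precisely your inflation $S\mapsto S\times[c]$, and then notes the resulting copy of $\overrightarrow{KG}_{a,b}$ sits inside $KG_{ac,bc}$ as a consistent sub-orientation. Your write-up is correct and simply makes explicit the verifications (preservation of disjointness and non-disjointness, consistency, composition with the homomorphism) that the paper leaves implicit.
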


\begin{proof}
It is enough to show that there exists a consistent sub-orientation of 
$KG_{ac,bc}$ isomorphic to $\overrightarrow{KG}_{a,b}$. This can be obtained by 
replacing each vertex $x = \{x_1, x_2, \cdots, x_b\}$ of $\overrightarrow{KG}_{a,b}$ with 
$\hat{x} = \{x_{11}, x_{12}, \cdots, x_{1b}, x_{21}, x_{22}, \cdots, x_{2b},\cdots,  x_{c1}, x_{c2}, \cdots, x_{cb}\}$.
\end{proof}

Now we are ready to prove Theorem~\ref{th kneser gen}. 

\medskip

\noindent \textit{Proof of Theorem~\ref{th kneser gen}.}
Let $\chi_o^{*}(G) \leq \frac{a}{b}$. That means, $G$ admits a $b'$-fold oriented $a'$-coloring $f$ for some $a', b'$ satisfying $\frac{a'}{b'} = \frac{a}{b}$. Notice that, as we have not assumed a reduced form of $\frac{a}{b}$, it is possible to have $a' < a$ and $b' < b$.  However, due to Theorem~\ref{th kneser sp} it is enough to prove assuming a reduced form of $\frac{a}{b}$.

Now let us construct an oriented  graph $C$ having the colors used for the 
$b'$-fold oriented  $a'$-coloring $f$ of $G$  as vertices. 
Moreover, two vertices $x$ and $y$ of $C$ have an arc $xy$ between them if there exists an arc $uv$ in $G$ satisfying $f(u)=x$ and $f(v) = y$. Notice that, $C$ is a 
consistent sub-orientation of 
$KG_{a',b'}$ and $f$ is a homomorphism of $G$ to $C$. This takes care of the ``if'' part. 

On the other hand, if $g: G \to \overrightarrow{KG}_{ac,bc}$ where 
$\overrightarrow{KG}_{ac,bc}$ is a consistent sub-orientation of 
$KG_{ac,bc}$, then $f$ itself is also a $bc$-fold $ac$-coloring of $G$. This implies $\chi_o^{*}(G) \leq \frac{a}{b}$. This completes the ``only if'' part. 
\qed

Next we will move onto the sandwich theorem mentioned in the introduction. Before stating it, it is useful to recall 
a handy characterization of an oriented relative clique.

\begin{proposition}~\cite{nandySS2016}\label{prop oclique char}
A vertex subset $R \subseteq V(G)$ of $G$ is an  oriented relative clique 
if and only if 
any non-adjacent pair of  
vertices in $R$ is connected by a directed 2-path.  
\end{proposition}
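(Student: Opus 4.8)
The plan is to prove Proposition~\ref{prop oclique char} by establishing both directions of the biconditional, working directly from the definition of an oriented relative clique, namely that $f(x) \neq f(y)$ for every homomorphism $f$ of $G$ and every distinct pair $x, y \in R$.

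First I would handle the easier ``if'' direction. Suppose every non-adjacent pair in $R$ is connected by a directed $2$-path. Let $f$ be any homomorphism of $G$ to some oriented graph $H$, and let $x, y \in R$ be distinct. If $x$ and $y$ are adjacent, then since homomorphisms preserve arcs and $H$ is an oriented graph (no loops), $f(x) \neq f(y)$ immediately. If $x$ and $y$ are non-adjacent, then by hypothesis there is a vertex $w$ with a directed $2$-path between $x$ and $y$ through $w$; the two arcs of this $2$-path map under $f$ to two arcs of $H$ sharing the common image $f(w)$. I would then argue that $f(x) = f(y)$ would force a digon (directed $2$-cycle) or a multi-arc configuration at $f(w)$ in $H$, contradicting that $H$ is an oriented graph. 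Concretely, a directed $2$-path between $x$ and $y$ through $w$ means one of the patterns $x \to w \to y$, $x \to w \leftarrow y$, etc.; in each case collapsing $x$ and $y$ to a single vertex produces either a loop, a digon, or two oppositely oriented arcs at $f(w)$, which is forbidden. Hence $f(x) \neq f(y)$, and $R$ is an oriented relative clique.

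For the ``only if'' direction, I would argue by contrapositive: assume there exist distinct non-adjacent $x, y \in R$ not joined by any directed $2$-path, and construct a homomorphism $f$ that identifies them. The natural construction is the \emph{identification} (vertex-contraction) map that sends both $x$ and $y$ to a single new vertex $z$ and is the identity on all other vertices. The main thing to verify is that the quotient digraph $G/\{x \sim y\}$ is still an oriented graph, so that this identification map is a legitimate homomorphism into an oriented graph witnessing $f(x) = f(y)$. The absence of a directed $2$-path between $x$ and $y$, together with their non-adjacency, is precisely what guarantees that the contraction creates no digon and no loop at $z$: a loop would require an arc between $x$ and $y$ (ruled out by non-adjacency), and a digon at $z$ would require arcs $u \to x, y \to u$ (or symmetric), which is exactly a directed $2$-path between $x$ and $y$ through $u$ (ruled out by hypothesis).

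The main obstacle I anticipate is the careful case analysis in verifying that the quotient graph remains an oriented graph, since one must rule out \emph{all} ways a digon could arise at the contracted vertex $z$ and confirm each corresponds to a forbidden directed $2$-path. In particular I would need to check the configurations where a common neighbor $u$ has arcs to/from both $x$ and $y$: if $u \to x$ and $u \to y$ (both out), or $x \to u$ and $y \to u$ (both in), contraction produces only a single arc, not a digon, so these are harmless; the problematic cases are exactly $u \to x$ with $y \to u$ (giving a digon $u \to z \to u$ after contraction), and these are precisely the directed $2$-paths $y \to u \to x$. Enumerating these and matching them to the ``directed $2$-path'' condition is the technical heart of the argument, though it is routine once set up. Since this result is quoted from~\cite{nandySS2016}, I would expect the paper to either cite it or give a brief proof along these lines.
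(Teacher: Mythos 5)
The paper does not prove this proposition at all --- it is quoted from~\cite{nandySS2016} and used as a black box --- so there is no in-paper proof to compare against. Your argument is the standard one and is essentially correct: for the ``if'' direction, adjacency forces distinct images because the target has no loops, and a directed $2$-path $x \to w \to y$ forces distinct images because $f(x)=f(y)$ would create the digon $f(x)f(w)$, $f(w)f(x)$; for the ``only if'' direction, identifying a non-adjacent pair with no connecting directed $2$-path yields a quotient that is still an oriented graph, so the quotient map is a homomorphism merging the two vertices. (The paper itself uses exactly this identification idea in its proof of the companion proposition on oriented independent sets.) The one point you should clean up is in the ``if'' direction: you list $x \to w \leftarrow y$ as a ``pattern'' of a directed $2$-path and assert that collapsing it is forbidden, but that configuration is not a directed $2$-path (it is not a directed path), and collapsing it merely merges two parallel arcs into $w$, which is perfectly legal --- as your own case analysis in the ``only if'' direction correctly recognizes. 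Under the broad reading of ``directed $2$-path'' that includes $x \to w \leftarrow y$, the ``if'' direction would actually be false, so you must commit to the correct reading (a directed path on two arcs, i.e.\ $x \to w \to y$ or $y \to w \to x$) throughout; with that fixed, the proof is complete.
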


Now we are ready to state and prove the sandwich theorem. 

\begin{theorem}\label{prop sandwich}
For any  oriented graph $G$, 
$\omega_{ro}(G) \leq \chi^*_o(G) \leq \chi_o(G)$. 
\end{theorem}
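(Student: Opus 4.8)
The plan is to prove the two inequalities separately, disposing of the upper bound in one line and concentrating the real work on the lower bound.

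For the upper bound $\chi^*_o(G) \le \chi_o(G)$, I would simply unwind the definition $\chi^*_o(G) = \inf_{b} \chi^b_o(G)/b$ together with the observation, already recorded in the excerpt, that a $1$-fold oriented $k$-coloring is precisely an oriented $k$-coloring, so that $\chi^1_o(G) = \chi_o(G)$. Evaluating the infimum at $b = 1$ then yields $\chi^*_o(G) \le \chi^1_o(G)/1 = \chi_o(G)$, which is all that is needed.

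For the lower bound $\omega_{ro}(G) \le \chi^*_o(G)$, I would fix a largest oriented relative clique $R$, so $|R| = \omega_{ro}(G)$, and argue that for every $b$ any $b$-fold oriented $k$-coloring $f$ must use $k \ge b\,|R|$ colors. The pivotal claim is that $f$ assigns pairwise disjoint color sets to the vertices of $R$, i.e. $f(x) \cap f(y) = \emptyset$ for all distinct $x,y \in R$. Granting this claim, the $|R|$ sets $f(x)$ are pairwise disjoint $b$-element subsets of an $a$-element palette with $a = k$, whence $k \ge \sum_{x \in R} |f(x)| = b\,|R|$. This gives $\chi^b_o(G) \ge b\,\omega_{ro}(G)$ for every $b$, and taking the infimum over $b$ produces $\chi^*_o(G) \ge \omega_{ro}(G)$.

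The crux, and the step I expect to be the main obstacle, is establishing the disjointness claim, where I would split into cases via the characterization in Proposition~\ref{prop oclique char}. If $x,y \in R$ are adjacent, condition $(i)$ of the $b$-fold coloring gives $f(x) \cap f(y) = \emptyset$ immediately. If they are non-adjacent, the characterization supplies a directed $2$-path, say $x \to z \to y$. I would then invoke condition $(ii)$ with the first arc taken to be $x \to z$ and the second arc $z \to y$: matching these against the template ``$f(x) \cap f(w) \neq \emptyset \implies f(y) \cap f(z) = \emptyset$ for arcs $xy, zw$'', the two middle vertices both become $z$, so the hypothesis reads $f(x) \cap f(y) \neq \emptyset$ and the conclusion reads $f(z) \cap f(z) = \emptyset$, that is $f(z) = \emptyset$. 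Since $f(z)$ is a $b$-element set with $b \ge 1$, this is impossible, forcing $f(x) \cap f(y) = \emptyset$; the reversed orientation $y \to z \to x$ is handled symmetrically. The only delicate point is correctly identifying the outer endpoints $x, y$ and the repeated inner vertex $z$ when applying condition $(ii)$, since the definition reuses the same letters; once the labels are aligned, the contradiction $f(z) = \emptyset$ drops out at once and the claim, and with it the theorem, follows.
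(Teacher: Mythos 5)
Your proposal is correct and follows essentially the same route as the paper: the upper bound by evaluating the infimum at $b=1$, and the lower bound by showing the color sets on a largest oriented relative clique are pairwise disjoint, forcing $a \geq b\,|R|$. The only difference is that you spell out the disjointness claim via condition $(ii)$ applied to the two arcs of a directed $2$-path (deriving the contradiction $f(z)=\emptyset$), a step the paper simply asserts; your careful matching of the variable names in condition $(ii)$ is accurate.
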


\begin{proof}
Let $R$ be a relative oriented clique of $G$ with $|R| = \omega_{ro}(G)$. 
Suppose $f$ is a $b$-fold oriented $a$-coloring of $G$. Then, as the non-adjacent vertices of $R$ are connected by a directed $2$-path, we must have $f(x) \cap f(y) = \emptyset$ for all distinct $x,y \in R$. Thus, a total of $|R|b$ colors have been used on the vertices of $R$. 
Therefore, $a \geq |R|b = \omega_{ro}(G)b$ which implies that $\frac{a}{b} \geq \omega_{ro}$. Hence the first inequality.

The second inequality follows from the trivial observation 
$\chi^{*}_o(G) \leq \chi^{1}_o(G) = \chi_o(G)$. 
\end{proof}

We will sign off the section by establishing a general lower bound for oriented fractional chromatic number. To do so, we will introduce an oriented analogue of independent sets. 
A vertex subset $I \subseteq V(G)$ of an oriented graph $G$ is an \textit{oriented independent set} if for each distinct pair of vertices $x, y \in I$, it is possible to find an oriented coloring $f$ of $G$ satisfying $f(x)=f(y)$.

\begin{proposition}
Given an oriented graph $G$, a vertex subset $I \subseteq V(G)$ is an oriented independent set if and only if any two vertices of $I$ are neither adjacent nor connected by a directed $2$-path. 
\end{proposition}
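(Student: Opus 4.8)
The plan is to reduce the statement to a per-pair criterion, since the definition of an oriented independent set concerns distinct pairs $x,y \in I$ and is witnessed by a possibly different coloring for each pair. Concretely, I would isolate the following claim: for two distinct vertices $x,y$ of an oriented graph $G$, there is an oriented coloring $f$ of $G$ with $f(x)=f(y)$ if and only if $x$ and $y$ are neither adjacent nor joined by a directed $2$-path. Granting this claim, the proposition is immediate: $I$ is an oriented independent set exactly when every pair in $I$ admits such an identifying coloring, which by the claim happens exactly when every pair is non-adjacent and not joined by a directed $2$-path.

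For the necessity part of the claim, I would show that adjacency or a directed $2$-path between $x$ and $y$ obstructs any identifying coloring. If $xy \in A(G)$, then condition (i) of oriented coloring forces $f(x)\neq f(y)$. If instead $x \to z \to y$ is a directed $2$-path, I apply condition (ii) to the arcs $xz$ and $zy$: since the head of the first equals the tail of the second, the hypothesis $f(z)=f(z)$ holds trivially, so (ii) yields $f(x)\neq f(y)$. The case $y \to z \to x$ is symmetric. Hence no oriented coloring can identify such a pair.

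For the sufficiency part, I would construct an explicit identifying coloring whenever $x,y$ are neither adjacent nor on a directed $2$-path. I take the coloring $f$ that assigns $x$ and $y$ a common color and gives every other vertex its own private color; equivalently, $f$ is the quotient map of $G$ obtained by merging $x$ and $y$. It then suffices to argue that this $f$ is a valid oriented coloring, and the cleanest way is to observe that $f$ is an oriented coloring precisely when the quotient digraph $Q$ (with $x,y$ identified) is itself an oriented graph, i.e.\ has no loop and no digon. A loop at the merged vertex would require an arc between $x$ and $y$, excluded by non-adjacency; any digon must involve the merged vertex (all other classes are singletons and $G$ has no digon), and unwinding it produces either a digon of $G$ or a directed $2$-path through some $z$ between $x$ and $y$, both excluded by hypothesis. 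Thus $Q$ is oriented and $f$ identifies $x$ and $y$.

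The argument is largely routine and parallels the characterization of oriented relative cliques in Proposition~\ref{prop oclique char}; the one point demanding care is the sufficiency step, where condition (ii) could in principle be violated in non-obvious ways. I expect that to be the main (if modest) obstacle, and I would handle it through the quotient reformulation above---reading condition (ii) as ``no digon in the quotient''---so that the only potential violations are exactly an arc between $x$ and $y$ (a loop) and a directed $2$-path between them (a digon), matching the hypotheses precisely.
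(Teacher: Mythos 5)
Your proposal is correct and follows essentially the same route as the paper: one direction is the obstruction argument from conditions (i) and (ii) (which the paper delegates to Proposition~\ref{prop oclique char}), and the other is the coloring that merges $x$ and $y$ while keeping all other vertices distinct. In fact your verification of the sufficiency step is more careful than the paper's, which merely asserts that the merging map is ``a homomorphism of $G$ to $G \setminus \{y\}$'' (strictly speaking the target should be the quotient digraph, whose loop-freeness and digon-freeness are exactly the non-adjacency and no-directed-$2$-path hypotheses, as you check).
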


\begin{proof}
The ``if'' part directly follows  from Proposition~\ref{prop oclique char}.  

For the ``only if'' part, given distinct $x, y \in I$, define the following function
$$f(z) = \begin{cases}
z, &\text{ for } z \in V(G) \setminus \{y\},\\
x, &\text{ for } z =y.
\end{cases}
$$
Notice that $f$ is a homomorphism of $G$ to $G \setminus \{y\}$. 
\end{proof}

We also use this definition to introduce the oriented analogue of the parameter independence number. 
The \textit{oriented independence number $\alpha_o(G)$} is the maximum $|I|$ where $|I|$ is an oriented independent set of $G$. 
Finally, we are ready to state the result that gives us a general lower bound of oriented fractional chromatic number. 

\begin{theorem}
Given any oriented graph $G$ we have $\chi_o^{*}(G) \geq \frac{|V(G)|}{\alpha_o(G)}$. 
\end{theorem}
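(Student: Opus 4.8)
The plan is to adapt the classical bound $\chi^{*}(G) \geq |V(G)|/\alpha(G)$ to the oriented setting by isolating single colors and then counting incidences. Fix any $b$-fold oriented $a$-coloring $f : V(G) \to P_b(S)$ with $|S| = a$. For each color $i \in S$, set $V_i = \{ v \in V(G) : i \in f(v) \}$, the set of vertices whose palette contains $i$. The heart of the argument is to show that every $V_i$ is an oriented independent set, after which a double-counting estimate finishes the proof.

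First I would check the two requirements from the characterization of oriented independent sets proved above, namely that no two vertices of $V_i$ are adjacent and no two are joined by a directed $2$-path. Adjacency is immediate: if $xy \in A(G)$ with $x, y \in V_i$, then condition $(i)$ forces $f(x) \cap f(y) = \emptyset$, contradicting $i \in f(x) \cap f(y)$. For the directed $2$-path, suppose $x, y \in V_i$ and $x \to m \to y$ is a directed $2$-path in $G$. Applying condition $(ii)$ to the arcs $xm$ and $my$, the hypothesis $f(x) \cap f(y) \neq \emptyset$ (which holds since $i$ lies in both) would force $f(m) \cap f(m) = \emptyset$, i.e. $f(m) = \emptyset$; this is impossible because $|f(m)| = b \geq 1$. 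Hence $V_i$ meets both conditions and is an oriented independent set, so $|V_i| \leq \alpha_o(G)$.

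It then remains to count color--vertex incidences in two ways. Since $f$ assigns each vertex exactly $b$ colors, $\sum_{v \in V(G)} |f(v)| = b\,|V(G)|$, while the same sum equals $\sum_{i \in S} |V_i|$. Combining this with $|V_i| \leq \alpha_o(G)$ and $|S| = a$ gives $b\,|V(G)| \leq a\,\alpha_o(G)$, that is $\tfrac{a}{b} \geq \tfrac{|V(G)|}{\alpha_o(G)}$. As this holds for every $b$-fold oriented $a$-coloring, it holds in particular for the ratios $\chi_o^{b}(G)/b$, and passing to the infimum yields $\chi_o^{*}(G) \geq |V(G)|/\alpha_o(G)$. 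The step I expect to require the most care is confirming that the single-color classes $V_i$ are genuinely oriented independent, specifically ruling out the directed $2$-path: the clean contradiction arises because condition $(ii)$ collapses to $f(m) = \emptyset$, which is precisely why working with $b$-fold colorings, whose palettes are nonempty, is essential. One should also keep in mind that the inequality is established for arbitrary $(a,b)$ realizing a coloring, so that it survives the passage to the infimum defining $\chi_o^{*}$.
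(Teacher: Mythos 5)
Your proof is correct and follows essentially the same route as the paper: the single-color classes $V_i$ are shown to be oriented independent sets, and a double count of color--vertex incidences yields $b\,|V(G)| \leq a\,\alpha_o(G)$. You spell out the verification that $V_i$ is oriented independent (which the paper only asserts) and you avoid assuming the infimum defining $\chi_o^{*}$ is attained by proving the inequality for every coloring first, but these are only minor refinements of the same argument.
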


\begin{proof}
Let $f$ be a $b$-fold $a$-coloring 
of $G$ with $\frac{a}{b} = \chi_o^{*}(G)$. 
This means, there is a set $S$ of $a$ colors whose set of all subsets having cardinality $b$ is $P_b(S)$, and in particular $f$ is a function from $V(G)$ to $P_b(S)$. 

Note that, if $f(u)=x$ and $f(v)=y$, and $x \cap y \neq emptyset$, 
then $u, v$ are independent. 
Thus $S_z = \{u: z \in f(u)\}$ for any $z \in S$ is an oriented independent set and 
$|S_z| \leq \alpha_o(G)$. 
Moreover, observe that every vertex $u \in V(G)$ is 
part of exactly $b$ such sets. 
 Thus we have
$$|V(G)| \cdot b = \sum_{z \in S} |S_z| \leq \alpha_o(G) \cdot |S|  =  \alpha_o(G) \cdot a.$$
This implies 
$$\chi_o^{*}(G) = \frac{a}{b} \geq \frac{|V(G)|}{\alpha_o(G)}$$
 and completes the proof. 
\end{proof}

Notice that, we can prove the tightness of the bound $\chi_o^{*}(C_7) = 3.5$ easily using the above result. In the next section, we are going to provide the generalized tight values for 
$\chi_o^{*}(C_r)$, where $C_r$ denotes the directed cycle of length $r$, for all $r \geq 3$.

\section{Directed cycles}\label{sec directed cycle}
We use the following classification of prime numbers to present our result.
A prime number $k > 3$ is a \textit{type-A prime} if 
$k\equiv 3\bmod 4$, and 
 is a \textit{type-B prime} if 
  $k\equiv 1\bmod 4$. 
  Let $r > 5$ be a positive integer. 
  If $r$ does not have a type-A prime factor, then 
   $\beta(r)=0$. Otherwise, $\beta(r)=\frac{4}{p+1}$ where $p$ is 
   the least type-A prime factor of $r$. 
 For instance 
$\beta(28)=\beta(35)=\frac{1}{2}$, 
$\beta(55)=\beta(88)=\frac{1}{3}$,
 and $\beta(26) =\beta(2^n)=0$.

\begin{theorem}\label{directed-cycles}
Let $C_r$ be a directed cycle of length $r$. Then

\begin{enumerate}[(a)]
\item $\chi^*_{o}(C_r) = 3$ if  $r \equiv 0 \bmod 3$,
\item $\chi^*_{o}(C_4) = 4$,
\item $\chi^*_{o}(C_5) = 5$,
\item $\chi^*_{o}(C_r) = 4 - \beta(r)$, if $r > 5$ and   
$r \not\equiv 0 \bmod 3$.
\end{enumerate}
\end{theorem}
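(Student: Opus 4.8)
The plan is to handle the four parts with the sandwich bound (Theorem~\ref{prop sandwich}) and the monotonicity property ($G \to H$ implies $\chi^*_o(G) \le \chi^*_o(H)$) doing most of the work, and to reserve the genuine difficulty for part~(d). Throughout write $C_r = x_0 x_1 \cdots x_{r-1} x_0$ and note that in $C_r$ two vertices are adjacent exactly when their indices differ by $\pm 1 \bmod r$, and are joined by a directed $2$-path exactly when they differ by $\pm 2 \bmod r$. For~(a), whenever $3 \mid r$ the map $x_i \mapsto (i \bmod 3)$ is a homomorphism $C_r \to C_3$, so $\chi^*_o(C_r) \le \chi^*_o(C_3)$; since $C_3$ is a directed triangle, $\omega_{ro}(C_3) = \chi_o(C_3) = 3$ and the sandwich bound forces $\chi^*_o(C_3) = 3$, while $\{x_0,x_1,x_2\}$ is always an oriented relative clique, so $\omega_{ro}(C_r) \ge 3$ and equality follows. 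For~(b) and~(c), every non-adjacent pair of $C_4$ (resp.\ $C_5$) lies at cyclic distance $2$ and is thus joined by a directed $2$-path, so by Proposition~\ref{prop oclique char} the entire vertex set is an oriented relative clique; hence $\omega_{ro}(C_4)=4$ and $\omega_{ro}(C_5)=5$, and together with the classical values $\chi_o(C_4)=4$, $\chi_o(C_5)=5$ the sandwich bound closes both cases.

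For the upper bound in~(d) I would generalise Example~\ref{example 7-cycle}. Fix a type-A prime $p$ and put $b=(p+1)/4$, which is an integer precisely because $p \equiv 3 \bmod 4$. With colour set $\mathbb{Z}_p$, assign to $x_i$ the consecutive block $f(x_i)=\{ib+1,ib+2,\dots,ib+b\} \bmod p$. Condition~(i) holds because consecutive blocks are disjoint (as $2b \le p$), and condition~(ii), written in terms of $t=(i-j)b \bmod p$, reduces to the disjointness of the two sets $\{1,\dots,2b-1\}$ and $\{p-2b+1,\dots,p-1\}$; these partition $\{1,\dots,p-1\}$ exactly when $4b=p+1$, which is why the value $b=(p+1)/4$ and the type-A condition are forced. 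This gives a $b$-fold $p$-colouring and hence $\chi^*_o(C_p) \le p/b = 4p/(p+1) = 4-\beta(p)$. For a general $r>5$ with $r \not\equiv 0 \bmod 3$: if $r$ has a least type-A prime factor $p$, then $p \mid r$ yields $C_r \to C_p$ and so $\chi^*_o(C_r) \le 4-\beta(r)$; if $r$ has no type-A prime factor then $\beta(r)=0$ and the classical bound $\chi^*_o(C_r) \le \chi_o(C_r)=4$ suffices.

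The lower bound $\chi^*_o(C_r) \ge 4-\beta(r)$ is the crux. Given a $b$-fold $a$-colouring $f$, I would study the colour supports $S_z=\{i \in \mathbb{Z}_r : z \in f(x_i)\}$. Condition~(i), together with its consequence $f(x_i) \cap f(x_{i+2})=\emptyset$ (which follows from condition~(ii) applied to the arcs $x_ix_{i+1}$ and $x_{i+1}x_{i+2}$), shows each $S_z$ is an oriented independent set, i.e.\ a subset of $\mathbb{Z}_r$ all of whose cyclic gaps are at least $3$; and $\sum_z |S_z| = rb$ records that each vertex is covered $b$ times. The real content of condition~(ii) is a restriction on \emph{concentric} co-occurring pairs: if some colour occupies both endpoints of a pair of gap $g+1$ centred at $c$, then no colour may occupy both endpoints of the pair of gap $g-1$ centred at the same $c$. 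The aim is to turn these constraints into the averaged density bound $a \ge (4-\beta(r))\,b$. Note that the minimum-gap condition alone gives only $\chi^*_o(C_r) \ge r/\lfloor r/3 \rfloor$, which happens to be tight for $r=7$ and $r=11$ but is strictly weaker than $4-\beta(r)$ in general (for instance when $r=19$), so condition~(ii) must enter in an essential, global manner.

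The step I expect to be the main obstacle is exactly this last one: quantifying how the concentric-pair restriction interacts with the demand that the supports cover $\mathbb{Z}_r$ uniformly, and pinning down the exact constant $4-\beta(r)$. This is where I expect the number theory to appear, since the extremal configurations are the arithmetic-progression supports of the construction, which can close up consistently around the cycle only when a type-A prime divides $r$; the integrality obstruction $4 \mid p+1$ is the same one met in the construction, and the least type-A prime factor governs the optimum. I anticipate that completing this will require a careful case analysis organised by the residues and the prime factorisation of $r$, and that the bulk of the effort will be spent there.
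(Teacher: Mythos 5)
Your treatment of parts (a)--(c) and of the upper bound in part (d) is sound and matches the paper: (a)--(c) follow from the sandwich bound $\omega_{ro}\leq\chi^*_o\leq\chi_o$ exactly as in the paper, and your block colouring of $C_p$ with $b=(p+1)/4$ consecutive colours from $\mathbb{Z}_p$, pulled back through $C_r\to C_p$, is precisely the paper's Lemma~\ref{lem typeA upper}. The problem is the lower bound in part (d), which you explicitly leave open (``the step I expect to be the main obstacle is exactly this last one''). That step is not a routine residue/prime-factorisation case analysis to be filled in later: it is the entire content of the theorem, and your support-based framing ($S_z=\{i:z\in f(x_i)\}$ with cyclic gaps $\geq 3$ plus the ``concentric pair'' restriction) does not by itself reach the constant $4-\beta(r)$, as you yourself observe for $r=19$. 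Note also that when $r$ has no type-A prime factor the claim is $\chi^*_o(C_r)\geq 4$, i.e.\ that \emph{no} colouring with $k/b<4$ exists at all, and nothing in your outline addresses that nonexistence.

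The missing idea in the paper is a rigidity statement about any ``miser'' colouring (one with $k/b<4$): normalising $c(u_0)=A$, $c(u_1)=B$, $c(u_2)=C$ and setting $D$ to be the remaining $k-3b<b$ colours, a chain of local implications (Observations~\ref{obs p1}--\ref{obs p6}) forces the sequence of labels around the cycle to decompose into \emph{triples} and \emph{quads} of prescribed form (Lemma~\ref{lem triple-quad}); consecutive triples are impossible (Lemma~\ref{no-consecutive-triples}); and the incidence matrix $T[i,j]=[\,c(u_i)\cap c(u_j)\neq\emptyset\,]$ is invariant under the diagonal shift (Lemma~\ref{cyclic-shift}), which forces the number $q$ of quads between successive triples to be constant (Lemma~\ref{constant-triple-separation}). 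Hence $r=(4q+3)t$, so $r$ must have a type-A prime divisor (the least $4q'+3$ dividing $r$ is prime), and counting how often a single colour can appear --- at most once per triple or quad, hence at most $(q+1)t$ times in total --- gives $rb\leq k(q+1)t$, i.e.\ $k/b\geq 4-\tfrac{1}{q+1}\geq 4-\beta(r)$. Without this global periodic structure (or some substitute for it), your averaged density bound cannot isolate the arithmetic-progression extremisers or produce the divisibility obstruction, so the proof as proposed is incomplete.
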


The first three parts of the above theorem follow directly from known results.  However, the main challenge is to prove the last part of it. 
That part is unexpected and gives us another indication of how difficult, counter-intuitive and yet beautiful the theory of 
fractional oriented coloring 
may actually be.

Throughout this section we will assume that the set of vertices of 
$C_r$ is $V(C_r) = \{u_i | i \in \mathbb{Z}/r\mathbb{Z} \}$ and the set of arcs of $C_r$ is  $$A(C_r) = \{u_iu_{i+1} | \text{  the operation `+' is taken modulo } r \}.$$

\medskip

\noindent  \textit{Proof of Theorem~\ref{directed-cycles}(a,b,c).}
We know that $\omega_{ro}(C_r) = \chi_o(C_r) = 3$ 
for all $r \equiv 0 \bmod 3$~\cite{sopena-updated-survey}. We also know that 
$\omega_{ro}(C_4) = \chi_o(C_4) = 4$ and 
$\omega_{ro}(C_5) = \chi_o(C_5) = 5$~\cite{sopena-updated-survey}. Thus the result follows due to Proposition~\ref{prop sandwich}. \hfill $ \square $

\medskip

The proof of Theorem~\ref{directed-cycles}(d) is broken into several 
lemmas and 
observations presented in the following.

For the rest of the section we  only deal with $C_r$ having $r > 5$ and 
$r \not\equiv 0 \bmod 3$. All the `$+$' and `$-$' operations perfomed in 
 the subscript of $u$ is computed modulo $r$ unless otherwise stated.

\begin{lemma}\label{greedy-coloring}
For all $r > 5$ and $r \not\equiv 0 \bmod 3$ we have, 
$3 \leq \chi^*_{o}(C_r) \leq 4$.
\end{lemma}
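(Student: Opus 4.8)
The plan is to establish the two inequalities separately, leaning on the sandwich theorem (Theorem~\ref{prop sandwich}) for the lower bound and on an explicit coloring for the upper bound.

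For the lower bound $\chi^*_o(C_r) \geq 3$, I would invoke Theorem~\ref{prop sandwich}, which gives $\chi^*_o(C_r) \geq \omega_{ro}(C_r)$. So it suffices to exhibit an oriented relative clique of size $3$ in $C_r$. Using the characterization in Proposition~\ref{prop oclique char}, I need three vertices that are pairwise either adjacent or joined by a directed $2$-path. Since $r > 5$ and $r \not\equiv 0 \bmod 3$, I would pick the vertices $u_0, u_1, u_2$: the pairs $(u_0,u_1)$ and $(u_1,u_2)$ are adjacent via arcs, while $u_0$ and $u_2$ are connected by the directed $2$-path $u_0 u_1 u_2$. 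This confirms $\{u_0,u_1,u_2\}$ is a relative clique, giving $\omega_{ro}(C_r) \geq 3$ and hence $\chi^*_o(C_r) \geq 3$.

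For the upper bound $\chi^*_o(C_r) \leq 4$, I would use the fact that $\chi^*_o(C_r) \leq \chi_o(C_r)$, again from Theorem~\ref{prop sandwich}, and recall the known result that the oriented chromatic number of any directed cycle (other than $C_4$ and $C_5$) is at most $4$; in particular $\chi_o(C_r) \leq 4$ for all $r \geq 3$ except the two short exceptional cases, which are excluded here since $r > 5$. Alternatively, to keep the argument self-contained, I would produce a direct $1$-fold oriented $4$-coloring: color the vertices greedily along the cycle so that consecutive colors differ and the directed-$2$-path condition is respected, exploiting the freedom afforded by four colors to close the cycle consistently. The name of the lemma (``greedy-coloring'') suggests the authors intend the latter, explicit route.

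The main obstacle I anticipate is the closing-up constraint in the explicit upper-bound coloring: a greedy pass assigns colors locally, but one must verify that the arc $u_{r-1} u_0$ and the wrap-around directed $2$-paths do not violate condition~(ii) of the oriented-coloring definition. This is a genuine but routine case analysis that hinges on $r \not\equiv 0 \bmod 3$ to avoid the degenerate $3$-colorable pattern and on $r$ being large enough to have room to adjust the last few colors. If instead I simply cite $\chi_o(C_r) \leq 4$, the obstacle disappears, so the real work in this lemma is only whichever of the two bounds the authors choose to prove from scratch rather than cite.
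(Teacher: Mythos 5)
Your proposal is correct and matches the paper's proof: the paper simply cites the known values $\omega_{ro}(C_r)=3$ and $\chi_o(C_r)=4$ for these $r$ and applies the sandwich theorem (Theorem~\ref{prop sandwich}), which is exactly your first route (your explicit relative clique $\{u_0,u_1,u_2\}$ is just a spelled-out justification of the first citation). The speculative explicit greedy $4$-coloring is not what the paper does and is unnecessary.
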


\begin{proof}
We know that 
$\omega_{ao}(C_r) = 3$ and $\chi_o(C_r) = 4$~\cite{sopena-updated-survey}.  Thus the result follows due to Proposition~\ref{prop sandwich}.
\end{proof}

Any $b$-fold oriented $k$-coloring 
of $C_r$ having 
$\frac{k}{b} < 4$ is a \textit{miser $b$-fold oriented $k$-coloring}. 

\medskip

Let $c$ be a miser $b$-fold oriented $k$-coloring of $C_r$ using a set $S$ of $k < 4b$ colors. Note that as 
a directed 2-path is an oriented clique, its vertices must receive 
disjoint sets of colors. As a consequence:

\begin{lemma}\label{lem dipath}
For all $i,j \in \{0,1, \cdots , r-1\}$  satisfying 
$0 \neq |i-j| \leq 2$ we have $c(u_i) \cap c(u_j) = \emptyset$.  
\end{lemma}

Thus assume that $c(u_0) = A$, $c(u_1) = B$ and $c(u_2) = C$. By Lemma~\ref{lem dipath}, $|A \cup B \cup C| = 3b$ and
$|A|=|B|=|C| = b$. 
Suppose that $D = S \setminus (A \cup B \cup C) $. Observe that
$|D| = k - 3b < b$. Notice that the definitions of $A, B, C$ and $D$ depends on the coloring $c$ and the vertices $u_0, u_1, u_2$.

Now we  introduce some notations to aid our proof. 
Note that $c(u_i) \subset A \cup B \cup C \cup D$ 
for each $i \in \{0,1, \cdots , r-1 \}$. 
We will use the notation $c(u_i) = A_i B_i C_i D_i$ to denote the set of colors used on $u_i$. 
However, at some point of time if we are sure that some of the sets among 
$A, B, C$ or $D$ has empty intersection with $c(u_i)$, then we can drop the corresponding name of the set along with its subscript 
to denote $c(u_i)$. 
Moreover, if we are sure that $c(u_i) \cap X \neq \emptyset$
for some $X \in \{A,B,C,D\}$, then we can replace $X_i$ with $X^*_i$ in the above notation.

One more type of notation that we use is the following: if for some $X \in \{A,B,C,D\}$ we know that 
$c(u_i) \cap X  \neq \emptyset$, then we may denote it as 
$c(u_i)=X_{\#}$. 
For instance if $c(u_i)=A_{\#}$, then we are sure that $c(u_i)$ has colors from $A$ but it may or may not have colors from the other sets. 
Furthermore, if $c(u_i) \cap X  \neq \emptyset$ and 
$c(u_i) \cap Y  \neq \emptyset$ 
for some $X, Y \in \{A,B,C,D\}$, then we may denote it as $c(u_i)=X_{\#}Y_{\#}$. Similarly, we may use three or four set names among $A,B,C,D$ for this notation.

For instance, if at some point of time we learn that 
$c(u_i) \cap A \neq \emptyset$, $c(u_i) \cap B = \emptyset$ and
 $c(u_i) \cap C = \emptyset$, then we can write 
$c(u_i) = A_i B_i D_i$, $c(u_i) = A^*_i D_i$, 
$c(u_i) = A^*_i B^*_i D_i$,
$c(u_i) = A_{\#}B_{\#}$ or 
$c(u_i) = A_{\#}$ to denote the set of colors used on $u_i$. We will choose to use the notation that suits our purpose.

Now we are going to list some observations needed for the proof. 

\begin{observation}\label{obs p1}
There exists no index $i$ with $c(u_i) = D^*_i$. 
\end{observation}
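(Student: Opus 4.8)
The plan is to obtain the statement as an immediate contradiction from a cardinality count, so the real work is only in correctly reading off what the compact notation $c(u_i) = D^*_i$ asserts. First I would unpack the notation introduced just above the observation: writing $c(u_i) = D^*_i$ means that we have dropped the letters $A$, $B$, and $C$, which by convention records $c(u_i) \cap A = c(u_i) \cap B = c(u_i) \cap C = \emptyset$, while the starred $D^*_i$ records $c(u_i) \cap D \neq \emptyset$. Since $A$, $B$, $C$, $D$ partition the color set $S$, these four facts together force the containment $c(u_i) \subseteq D$.

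Next I would invoke the defining property of a $b$-fold coloring, namely that $c$ maps each vertex to a set in $P_b(S)$, so $|c(u_i)| = b$ for every index $i$. Combining this with the containment $c(u_i) \subseteq D$ and the construction of $D$, for which we already recorded $|D| = k - 3b < b$ (this is exactly where the miser hypothesis $k < 4b$ enters), I would conclude
$$b = |c(u_i)| \leq |D| = k - 3b < b,$$
which is absurd. Hence no such index $i$ exists. There is no genuine obstacle here: once the notation is decoded, the observation is a one-line pigeonhole argument saying that a vertex cannot be colored entirely out of the small leftover set $D$. The only point demanding care is that the claim hinges on $|D| < b$, i.e. on $c$ being a \emph{miser} coloring; for a non-miser coloring the inequality could fail and the observation would not hold, so I would make sure this hypothesis is stated as the standing assumption on $c$ throughout.
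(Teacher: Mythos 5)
Your proof is correct and coincides with the paper's own argument: decoding $c(u_i)=D^*_i$ as $c(u_i)\subseteq D$ and comparing $|c(u_i)|=b$ against $|D|=k-3b<b$ is exactly the one-line cardinality contradiction given in the paper. Your explicit remark that the miser hypothesis $k<4b$ is what makes $|D|<b$ is a welcome clarification but not a different route.
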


\begin{proof}
It is not possible to have $c(u_i) = D^*_i$ as $|c(u_i)| = b$, 
$|D^*_i| \leq |D| < b$. 
\end{proof}

The next observation directly follows from the second condition from the definition of $b$-fold oriented $k$-coloring and the fact that  $c(u_0) = A$, $c(u_1) = B$, $c(u_2) = C$.

\begin{observation}\label{obs p2}
The following implications hold:
\begin{itemize}
\item[$(a)$]  $c(u_i) = B_{\#} \implies c(u_{i+1}) \neq A_{\#}$,

\item[$(b)$]  $c(u_{i+1}) = A_{\#} \implies c(u_i) \neq B_{\#}$,

\item[$(c)$]  if $c(u_3) = A$, then 
$c(u_i) = A_{\#} \implies c(u_{i+1}) \neq C_{\#}$,
\end{itemize}
 \end{observation}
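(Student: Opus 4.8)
The plan is to read each implication directly off condition $(ii)$ of the $b$-fold oriented $k$-coloring, using the \emph{reference arcs} whose colour sets we have already fixed, namely $u_0u_1$ (with $c(u_0)=A$, $c(u_1)=B$) and, for part $(c)$, $u_2u_3$ (with $c(u_2)=C$ and, by hypothesis, $c(u_3)=A$). Recall that condition $(ii)$ states: for any two arcs $xy$ and $zw$, if $c(x)\cap c(w)\neq\emptyset$ then $c(y)\cap c(z)=\emptyset$. The only thing requiring care is matching each vertex of $C_r$ to its correct role among $x,y,z,w$; once that bookkeeping is done, each implication drops out with no further work.

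For part $(a)$ I would take the arcs $xy=u_iu_{i+1}$ and $zw=u_0u_1$, so that $x=u_i$, $y=u_{i+1}$, $z=u_0$, $w=u_1$. Then $c(x)\cap c(w)=c(u_i)\cap c(u_1)=c(u_i)\cap B$ and $c(y)\cap c(z)=c(u_{i+1})\cap c(u_0)=c(u_{i+1})\cap A$. Hence condition $(ii)$ reads exactly $c(u_i)\cap B\neq\emptyset\implies c(u_{i+1})\cap A=\emptyset$, which is the claim $c(u_i)=B_{\#}\implies c(u_{i+1})\neq A_{\#}$. Part $(b)$ is then immediate as the contrapositive of part $(a)$: if $c(u_{i+1})\cap A\neq\emptyset$, then we cannot have $c(u_i)\cap B\neq\emptyset$, i.e. $c(u_{i+1})=A_{\#}\implies c(u_i)\neq B_{\#}$.

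For part $(c)$, under the extra hypothesis $c(u_3)=A$, I would instead pair the generic arc $xy=u_iu_{i+1}$ with the reference arc $zw=u_2u_3$, giving $x=u_i$, $y=u_{i+1}$, $z=u_2$, $w=u_3$. Now $c(x)\cap c(w)=c(u_i)\cap c(u_3)=c(u_i)\cap A$ and $c(y)\cap c(z)=c(u_{i+1})\cap c(u_2)=c(u_{i+1})\cap C$, so condition $(ii)$ yields $c(u_i)\cap A\neq\emptyset\implies c(u_{i+1})\cap C=\emptyset$, that is $c(u_i)=A_{\#}\implies c(u_{i+1})\neq C_{\#}$.

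I do not expect any genuine obstacle here: each part is a single invocation of condition $(ii)$ against a fixed reference arc. The only point demanding attention is \emph{orientation bookkeeping} — condition $(ii)$ is sensitive to which endpoints are tails and which are heads, so one must keep the arc directions $u_iu_{i+1}$, $u_0u_1$ and $u_2u_3$ consistent throughout and be careful not to accidentally swap the roles of $x$ with $z$ or of $y$ with $w$. This is precisely why part $(c)$ requires the hypothesis $c(u_3)=A$: without it the arc $u_2u_3$ would not be a copy of the $C\to A$ pattern, and the intersection $c(u_i)\cap c(u_3)$ could not be identified with $c(u_i)\cap A$.
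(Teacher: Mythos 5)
Your proposal is correct and matches the paper's (unwritten, asserted-as-immediate) argument: each part is a single application of condition $(ii)$ of the $b$-fold oriented colouring definition to the generic arc $u_iu_{i+1}$ paired with the reference arc $u_0u_1$ (for $(a)$, $(b)$) or $u_2u_3$ (for $(c)$), using $c(u_0)=A$, $c(u_1)=B$, $c(u_2)=C$ and the hypothesis $c(u_3)=A$. Your tail/head bookkeeping is right, and $(b)$ is indeed just the contrapositive of $(a)$.
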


In the above statement the case if $c(u_i) = A_{\#}$ is considered under the special additional condition $c(u_3) = A$.

\begin{observation}\label{obs p3}
If $c(u_i) = X_iD_i$ and $c(u_j) = X_jD_j$, then  
$X_iD_i \cap X_jD_j \neq \emptyset$ for any 
$(X_i,X_j) \in \{(A_i,A_j),(B_i,B_j),(C_i,C_j)\}$.
\end{observation}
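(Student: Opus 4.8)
The plan is to settle this by a one-line pigeonhole count, once the shorthand is unpacked correctly. First I would read off what the notation $c(u_i) = X_i D_i$ actually asserts. Since $(X_i, X_j)$ is one of $(A_i, A_j)$, $(B_i, B_j)$, $(C_i, C_j)$, both vertices share the \emph{same} main set $X \in \{A, B, C\}$. By the convention that a dropped set name signals an empty intersection, writing $c(u_i) = X_i D_i$ means that $c(u_i)$ meets only $X$ and $D$ among $A, B, C, D$, i.e. $c(u_i) \subseteq X \cup D$; likewise $c(u_j) \subseteq X \cup D$. So both color sets are trapped in the common universe $X \cup D$.

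Next I would bound the size of that universe. As $D = S \setminus (A \cup B \cup C)$, the set $D$ is disjoint from $X$, so $|X \cup D| = |X| + |D| = b + |D|$. The miser hypothesis $k < 4b$ gives $|D| = k - 3b < b$, and hence $|X \cup D| = b + |D| < 2b$.

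Finally, the conclusion is immediate: since $c$ is a $b$-fold coloring, $|c(u_i)| = |c(u_j)| = b$, and both are subsets of $X \cup D$. Were they disjoint, their union would have cardinality $2b$ while lying inside $X \cup D$, contradicting $|X \cup D| < 2b$. Equivalently, inclusion--exclusion inside $X \cup D$ yields $|c(u_i) \cap c(u_j)| \geq 2b - |X \cup D| > 0$, so $X_i D_i \cap X_j D_j = c(u_i) \cap c(u_j) \neq \emptyset$.

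I do not expect a genuine obstacle here; the observation is essentially a counting remark. The only two points demanding care are interpreting the shorthand so that both colorings really are confined to the single set $X \cup D$ (not to two different main sets), and invoking the \emph{strict} inequality $|D| < b$ supplied by the miser condition $k < 4b$, which is exactly what forces two $b$-subsets of $X \cup D$ to overlap.
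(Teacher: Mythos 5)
Your proof is correct and is essentially the paper's own argument: both color sets are $b$-subsets of $X \cup D$, whose size is $b + |D| < 2b$ by the miser condition, so they must intersect by pigeonhole. The paper states this in one line; you have simply spelled out the same counting in full.
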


\begin{proof}
This observation holds by the pigeonhole principle as 
$|c(u_i)| = |c(u_j)| = b$ and $|X \cup D| < 2b$.
\end{proof}

The next observation is based on the fact that the color sets 
assigned to the  three vertices belonging to a directed 2-path must be distinct under any $b$-fold oriented $k$-coloring as a directed 2-path is an oriented clique. 

\begin{observation}\label{obs p4}
For any index $i \in \{0,1, \cdots , r-1\}$ and for any 
$X \in \{A,B,C\}$ we have, 
$$X \cap (c(u_i) \cup c(u_{i+1}) \cup c(u_{i+2})) \neq \emptyset .$$
\end{observation}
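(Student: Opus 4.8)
The plan is to prove Observation~\ref{obs p4} by contradiction, showing that if some set $X \in \{A,B,C\}$ avoids three consecutive vertices, we can redistribute its colors to obtain a cheaper coloring, contradicting the assumption that $c$ is a miser coloring. First I would suppose, for some fixed index $i$ and some $X \in \{A,B,C\}$, that
$$X \cap \big(c(u_i) \cup c(u_{i+1}) \cup c(u_{i+2})\big) = \emptyset,$$
so none of the three consecutive vertices $u_i, u_{i+1}, u_{i+2}$ uses any color of $X$. The intuition is that $X$ was one of the three ``base'' color classes forced by the directed $2$-path $u_0 u_1 u_2$, so if an entire length-two window of the cycle can be colored without touching $X$, then $X$ is locally redundant and the coloring is wasteful in a way that should let us recover a contradiction with $k < 4b$.

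The key observation driving the contradiction is that the three vertices of a directed $2$-path form an oriented clique (as already used repeatedly in this section via Proposition~\ref{prop oclique char}), so by Lemma~\ref{lem dipath} the sets $c(u_i), c(u_{i+1}), c(u_{i+2})$ are pairwise disjoint and each has size exactly $b$, giving $3b$ distinct colors used on this window. Since $X$ is disjoint from all three, these $3b$ colors all lie in $S \setminus X$, a set of size $k - |X| = k - b < 4b - b = 3b$. This is the crux: we have packed $3b$ distinct colors into a set of fewer than $3b$ colors, which is impossible by counting. I expect this pigeonhole-style counting to be the entire engine of the proof, exactly paralleling the mechanism of Observations~\ref{obs p1} and~\ref{obs p3}.

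The main obstacle, and the point I would check most carefully, is whether the bound $|X| = b$ is valid for \emph{every} window index $i$, not just for the special base vertices $u_0, u_1, u_2$. Here $X$ ranges only over $\{A,B,C\}$ (not $D$), and these were defined as $c(u_0), c(u_1), c(u_2)$ respectively, each of cardinality exactly $b$ by Lemma~\ref{lem dipath}; so $|X| = b$ holds for all three choices independent of $i$. This is precisely why the observation excludes $X = D$: one has $|D| < b$, so the same counting would instead give $3b$ colors inside $S \setminus D$ of size $k - |D| > 3b$, which is no contradiction. Thus the restriction $X \in \{A,B,C\}$ is exactly what makes the argument go through, and I would make sure the write-up flags that the disjointness and the exact size $b$ of each $c(u_j)$ in the window both rely on the directed-$2$-path/oriented-clique structure guaranteed by Lemma~\ref{lem dipath}.
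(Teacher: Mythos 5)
Your proof is correct and is essentially identical to the paper's: both use that $u_i, u_{i+1}, u_{i+2}$ form a directed $2$-path (hence an oriented relative clique), so their $3b$ pairwise-disjoint colors cannot fit inside $S \setminus X$, which has size $k - b < 3b$. Your additional remark explaining why $X = D$ must be excluded is a helpful clarification but does not change the argument.
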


\begin{proof}
Note that the vertices $u_i, u_{i+1}$ and $u_{i+2}$ of $C_r$ 
induces a directed 2-path, and hence $\{u_i, u_{i+1}, u_{i+2}\}$ is a an oriented relative clique. 
Thus  $|c(u_i) \cup c(u_{i+1}) \cup c(u_{i+2})| = 3b$.
However, $|S \setminus X| = k-b < 3b$. This implies
$X \cap (c(u_i) \cup c(u_{i+1}) \cup c(u_{i+2})) \neq \emptyset$. 
\end{proof}

The above observation shows that three consecutive vertices of $C_r$ cannot all avoid colors from a particular set $X \in \{A,B,C\}$. 
The following observation will show the opposite.

\begin{observation}\label{obs p5}
For any index $i \in \{0,1, \cdots , r-1\}$ and for any 
$X \in \{A,B,C\}$ we cannot have, 
$$c(u_i), c(u_{i+1}), c(u_{i+2}) = X_{\#} .$$
\end{observation}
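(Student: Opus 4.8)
The plan is to argue by contradiction and reduce everything to Observation~\ref{obs p4}. Suppose that for some index $i$ and some $X \in \{A,B,C\}$ we had $c(u_i) = X_{\#}$, $c(u_{i+1}) = X_{\#}$ and $c(u_{i+2}) = X_{\#}$ simultaneously. My goal would be to deduce from this that three \emph{consecutive} vertices all entirely avoid a common set $Y \in \{A,B,C\}$ (that is, each of their color sets is disjoint from $Y$), which is exactly what Observation~\ref{obs p4} forbids. So Observation~\ref{obs p5} is really the ``opposite'' statement to Observation~\ref{obs p4}, and the whole proof is the machinery that converts ``three consecutive vertices all meet $X$'' into ``three consecutive vertices all miss $Y$''.

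The engine I would build is a translation trick coming from condition $(ii)$ of a $b$-fold oriented $k$-coloring applied against the two reference arcs $u_0u_1$ (of type $A \to B$, since $c(u_0)=A$, $c(u_1)=B$) and $u_1u_2$ (of type $B \to C$). Feeding the pair $u_0u_1$ and $u_{j-1}u_j$ into condition $(ii)$ yields $c(u_j)\cap A \neq \emptyset \implies c(u_{j-1})\cap B = \emptyset$; feeding $u_1u_2$ and $u_{j-1}u_j$ yields $c(u_j)\cap B \neq \emptyset \implies c(u_{j-1})\cap C = \emptyset$; and feeding $u_ju_{j+1}$ and $u_1u_2$, now using the reference arc as the \emph{second} arc, yields $c(u_j)\cap C \neq \emptyset \implies c(u_{j+1})\cap B = \emptyset$. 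Informally: ``meeting $A$'' forces the predecessor to ``miss $B$'', ``meeting $B$'' forces the predecessor to ``miss $C$'', and ``meeting $C$'' forces the successor to ``miss $B$''.

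With these implications the cases fall out quickly. If $X=A$, the first implication applied to $j=i,i+1,i+2$ shows that $u_{i-1},u_i,u_{i+1}$ all miss $B$, contradicting Observation~\ref{obs p4} (taken at index $i-1$ with the set $B$). If $X=B$, the second implication applied to $j=i,i+1,i+2$ shows that $u_{i-1},u_i,u_{i+1}$ all miss $C$, again contradicting Observation~\ref{obs p4}. In each of these two cases the reference arc points \emph{out} of the relevant set, so the translation is the natural forward one.

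The step I expect to be the main obstacle is $X=C$. Here $C=c(u_2)$ sits at the \emph{head} of its only guaranteed reference arc $u_1u_2$, and there is no arc of a prescribed type leaving a $C$-vertex (the value $c(u_3)$ is not fixed in general, which is precisely why Observation~\ref{obs p2}$(c)$ had to assume $c(u_3)=A$). My resolution would be to run the reference arc \emph{backwards}, i.e. to place $u_1u_2$ as the second arc in condition $(ii)$, which is exactly the third implication above; applying it to $j=i,i+1,i+2$ shows that $u_{i+1},u_{i+2},u_{i+3}$ all miss $B$, once more contradicting Observation~\ref{obs p4}. The only remaining routine checks are the degenerate overlaps of the triple with $\{u_0,u_1,u_2\}$ and the fact that all index arithmetic is modulo $r$; these cause no difficulty, since a vertex equal to $u_1$ or $u_2$ already fails to meet $A$, so such overlaps never arise under the standing assumption.
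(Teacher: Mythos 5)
Your proof is correct and follows essentially the same route as the paper: argue by contradiction and use the condition-$(ii)$ implications against the reference arcs $u_0u_1$ and $u_1u_2$ (the content of Observation~\ref{obs p2}) to convert ``three consecutive vertices all meet $X$'' into ``three consecutive vertices all miss some $Y$'', contradicting Observation~\ref{obs p4}. The only difference is that you explicitly work out the $B$ and $C$ cases (including the reversed direction needed for $C$), which the paper dismisses as ``similar''.
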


\begin{proof}
Suppose the contrary. 

If $c(u_i), c(u_{i+1}), c(u_{i+2}) = A_{\#}$, then 
Observation~\ref{obs p2} implies 
$c(u_{i-1}), c(u_{i}), c(u_{i+1}) \neq B_{\#}$ contradicting 
Observation~\ref{obs p4}. The other two cases, namely, 
if $c(u_i), c(u_{i+1}), c(u_{i+2}) = B_{\#}$ and,
if $c(u_i), c(u_{i+1}), c(u_{i+2}) = C_{\#}$ can be proved similarly. 
\end{proof}

Based on the above observations we are able to show stronger implications.

\begin{observation}\label{obs p6}
The following implications hold:
\begin{itemize}
\item[$(a)$] $c(u_i) = C_\#$ and 
$c(u_{i}) \neq B_\# \implies   c(u_{i+1}) = A_\#$   and 
$c(u_{i+1}) \neq C_\#$,

\item[$(b)$] $c(u_i) = B_\#$ and 
$c(u_{i}) \neq A_\# \implies   c(u_{i+1}) = C_\#$   and 
$c(u_{i+1}) \neq B_\#$,

\item[$(c)$] $c(u_3) = A$, $c(u_i) = A_\#$ 
and $c(u_{i}) \neq C_\# \implies c(u_{i+1}) = B_\#$ and
$c(u_{i+1}) \neq A_\#$. 
\end{itemize}
 \end{observation}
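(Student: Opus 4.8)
The plan is to prove Observation~\ref{obs p6} by combining the implications from Observation~\ref{obs p2} with the ``no-three-consecutive'' constraint established in Observation~\ref{obs p5} and the ``three-consecutive-covering'' constraint of Observation~\ref{obs p4}. The three parts are structurally parallel (they follow the cyclic pattern $A \to B \to C \to A$ on the colors), so I would first prove part $(a)$ in detail and then remark that $(b)$ and $(c)$ follow by the same argument after relabeling, keeping in mind the special role of the extra hypothesis $c(u_3)=A$ in part $(c)$.

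\medskip

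For part $(a)$, I assume $c(u_i)=C_\#$ and $c(u_i)\neq B_\#$; that is, $u_i$ uses a color from $C$ but no color from $B$, so $c(u_i)=A_i C_i D_i$ with $C_i\neq\emptyset$. The goal is to force $c(u_{i+1})=A_\#$ together with $c(u_{i+1})\neq C_\#$. First I would establish $c(u_{i+1})\neq C_\#$: if $u_{i+1}$ also used a color from $C$, I would try to violate the second coloring condition via the pair of arcs meeting $C$, or produce a forbidden configuration with the baseline vertices $u_0,u_1,u_2$ whose colors are exactly $A,B,C$. I expect the cleanest route here is to use Observation~\ref{obs p2}$(a)$ (the implication $B_\#\implies$ next is not $A_\#$) contrapositively, reading off what the neighbor can and cannot be. Having ruled out $C$ at $u_{i+1}$, I would show $u_{i+1}$ must meet $A$: since $c(u_{i+1})\subseteq A_{i+1}B_{i+1}D_{i+1}$ with $|c(u_{i+1})|=b$, $|D|<b$, and a color from $B$ at $u_{i+1}$ would (via Observation~\ref{obs p2}) conflict with the $C$-color known to sit at $u_i$, the pigeonhole forces $c(u_{i+1})\cap A\neq\emptyset$, i.e. $c(u_{i+1})=A_\#$. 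This is where the counting observations~\ref{obs p1}, \ref{obs p3} and~\ref{obs p4} do the real work: they guarantee that once two of the three ``big'' sets $A,B,C$ are excluded, the remaining small set $D$ cannot fill the quota of $b$ colors on its own.

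\medskip

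For part $(b)$, the hypotheses $c(u_i)=B_\#$, $c(u_i)\neq A_\#$ should yield $c(u_{i+1})=C_\#$ and $c(u_{i+1})\neq B_\#$ by the identical argument with the cyclic shift $A\mapsto B\mapsto C\mapsto A$; I would invoke Observation~\ref{obs p2}$(b)$ in place of $(a)$. Part $(c)$ requires the standing assumption $c(u_3)=A$, matching the conditional clause ``if $c(u_3)=A$'' in Observation~\ref{obs p2}$(c)$, so I would explicitly carry that hypothesis and apply Observation~\ref{obs p2}$(c)$ to get the $A_\#\implies$ next is not $C_\#$ step, again closing the argument by the same pigeonhole/covering combination.

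\medskip

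The main obstacle I anticipate is the second conclusion in each part — the negative statements $c(u_{i+1})\neq C_\#$, $c(u_{i+1})\neq B_\#$, $c(u_{i+1})\neq A_\#$. The positive ``$=X_\#$'' conclusions are essentially forced by counting (a vertex needs $b$ colors, $D$ supplies fewer than $b$, and the second coloring condition blocks one of the two remaining big sets), but the negative conclusions require a more delicate use of the interaction between the arc $u_i u_{i+1}$ and the baseline arcs among $u_0,u_1,u_2$, ensuring that the repeated big color would create a pair of arcs violating condition $(ii)$ of the $b$-fold oriented coloring. I would want to track precisely which color class at $u_i$ is \emph{certain} (hence usable as the witness $X_\#$) versus merely possible, since the implication structure in Observation~\ref{obs p2} is asymmetric between $c(u_i)$ and $c(u_{i+1})$.
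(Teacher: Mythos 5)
Your overall plan matches the paper's: prove $(a)$ in detail, obtain $(b)$ and $(c)$ as cyclic analogues (with the extra hypothesis $c(u_3)=A$ feeding Observation~\ref{obs p2}$(c)$), and your derivation of the positive conclusion $c(u_{i+1})=A_\#$ is exactly the paper's — once $B$ and $C$ are excluded at $u_{i+1}$, the bound $|D|<b$ (Observation~\ref{obs p1}) forces $c(u_{i+1})\cap A\neq\emptyset$. The genuine gap is the negative conclusion $c(u_{i+1})\neq C_\#$, which you correctly flag as the main obstacle but never actually prove. The routes you gesture at do not work as stated: the contrapositive of Observation~\ref{obs p2}$(a)$ is just Observation~\ref{obs p2}$(b)$ and says nothing about $C$, and there is no direct violation of condition $(ii)$ between the arc $u_iu_{i+1}$ and a baseline arc — the pair $c(u_i)=C_\#$, $c(u_{i+1})=C_\#$ only yields $c(u_{i+1})\neq B_\#$ (via the arcs $u_1u_2$ and $u_iu_{i+1}$), and the implication coming from the arc $u_2u_3$ is unavailable because $c(u_i)\cap c(u_3)$ need not be nonempty here.

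The missing idea is to propagate the $B$-avoidance one step further and contradict the covering constraint rather than condition $(ii)$ directly: from $c(u_i)=C_\#$ one gets $c(u_{i+1})\neq B_\#$; if in addition $c(u_{i+1})=C_\#$, the same implication applied at $i+1$ gives $c(u_{i+2})\neq B_\#$; together with the hypothesis $c(u_i)\neq B_\#$ this makes all of $c(u_i),c(u_{i+1}),c(u_{i+2})$ avoid $B$, contradicting Observation~\ref{obs p4}. Note this also reverses the order of your two subclaims: you need $c(u_{i+1})\neq B_\#$ in hand before you can exclude $C_\#$ at $u_{i+1}$, not the other way around. Once this step is inserted, the rest of your argument, and its transfer to $(b)$ and $(c)$, goes through as in the paper.
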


\begin{proof}
$(a)$  If $c(u_i) = C_\#$ and 
$c(u_{i}) \neq B_\#$, then $c(u_{i+1}) \neq B_\#$ due to Observation~\ref{obs p2}.
Moreover, if   $c(u_{i+1}) = C_\#$, then $c(u_{i+2}) \neq B_\#$
 due to Observation~\ref{obs p2}.
This implies $c(u_i),c(u_{i+1}),c(u_{i+2}) \neq B_\#$ contradicting 
Observation~\ref{obs p4}. 
Also $c(u_{i+1}) = D^*_{i+1}$ is not possible due to Observation~\ref{obs p1}.
Thus $c(u_{i+1}) = A_\#$ and $c(u_{i+1}) \neq C_\#$.

\medskip

$(b), (c)$ The proofs are similar. 
\end{proof}

An \textit{automorphism} of an oriented graph $G$ is a function 
$f : V(G) \rightarrow V(G)$ such that 
$f(u)f(v) \in A(G)$ if and only if $uv \in A(G)$.
An oriented graph $G$ is \textit{$3$-dipath transitive} if for any two 
directed $3$-paths $abcd$ and $wxyz$ of $G$ there exists an automorphism $f$ 
such that $f(a) =w$, $f(b) =x$, $f(c) =y$ and $f(d) =z$.

Now we are going to prove one of the key lemmas to aid the proof of the theorem. We need to introduce some notations to state the lemma. 
The set of colors $c(u_i)$ is called the \textit{label} of $u_i$ for 
any $i \in \mathbb{Z}/r\mathbb{Z}$. 
A sequence of \textit{$k$ consecutive labels} are 
the labels used on
$u_{i}, u_{i+1}, \cdots, u_{i+k-1}$ for some 
$i \in \mathbb{Z}/r \mathbb{Z}$. 
A sequence of $3$ consecutive labels  of the form 
($A^*_iD_i, A_{i+1}B^*_{i+1}D_{i+1},  C^*_{i+2}D_{i+2}$) 
is  a \textit{triple}. 
The sequence of $4$ consecutive labels of the form 
($A^*_iD_i, A^*_{i+1}B^*_{i+1}D_{i+1},
B^*_{i+2}C^*_{i+2}D_{i+2},  C^*_{i+3}D_{i+3}$) is a \textit{quad}.

\begin{lemma}\label{lem triple-quad} 
Any label is either part of a triple or a quad. 
\end{lemma}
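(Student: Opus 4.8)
The plan is to track, for each vertex $u_i$, the \emph{type} $T_i \subseteq \{A,B,C\}$ recording which of the sets $A,B,C$ the label $c(u_i)$ meets; by Observation~\ref{obs p1} every type is nonempty, since $|D|<b$. First I would read the second colouring condition against the arcs incident to the fixed vertices $u_0,u_1,u_2$ to obtain the ``no backward rotation'' rules already packaged in Observation~\ref{obs p2}: a label meeting $B$ cannot be followed by one meeting $A$, and a label meeting $C$ cannot be followed by one meeting $B$. Combining these with the covering statement of Observation~\ref{obs p4} (every three consecutive labels meet each of $A,B,C$), I would show that no type contains both $A$ and $C$; hence every type is one of $\{A\},\{B\},\{C\},\{A,B\},\{B,C\}$, i.e.\ an interval of the path $A\!-\!B\!-\!C$. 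This is the structural backbone that turns the labels into a monotone rotation $A\to B\to C\to A$.

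Next I would promote Observation~\ref{obs p6} to statements about whole labels: a pure-$C$ label is necessarily followed by a pure-$A$ label, and a pure-$B$ label by a pure-$C$ label, while a pure-$A$ label is necessarily \emph{preceded} by a pure-$C$ label (the last using Observation~\ref{obs p2}(b) together with the exclusion of type $\{A,C\}$ and Observation~\ref{obs p3}). Observation~\ref{obs p3} also forbids two labels of the same pure type from lying within distance $2$, which prevents the rotation from stalling. Consequently the cyclic sequence of labels breaks at every consecutive pure-$C$, pure-$A$ pair, so it suffices to fix one such boundary and show that the block running from a pure-$A$ label up to and including the next pure-$C$ label is exactly a triple or a quad.

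For this last step I would fix a pure-$A$ vertex $u_i$ and analyse $u_{i+1}$. Being disjoint from $u_i$, not pure-$A$ (Observation~\ref{obs p3}) and not of type $\{A,C\}$, it lies in $\{\{B\},\{A,B\},\{B,C\},\{C\}\}$; moreover pure-$C$ at $u_{i+1}$ is impossible, since it would sit at distance $2$ from the pure-$C$ vertex $u_{i-1}$ preceding $u_i$. If $u_{i+1}$ avoids $C$ it is $\{B\}$ or $\{A,B\}$, and the forward rules drive $u_{i+2}$ (respectively $u_{i+2},u_{i+3}$) to a pure-$C$ label, producing a triple (respectively a quad). The genuinely delicate case is $u_{i+1}$ of type $\{B,C\}$, which would create the forbidden block $(A, BC, C)$; to exclude it I need the third forward rule ``$A$ cannot be followed by $C$'', namely Observations~\ref{obs p2}(c) and~\ref{obs p6}(c), which are only valid under $c(u_3)=A$ (equivalently, the presence of an arc from an exact copy of $C$ to an exact copy of $A$). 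Here I would invoke the $3$-dipath transitivity of $C_r$ to carry the standard $4$-path $u_0u_1u_2u_3$ onto the block under consideration, so that the conditional observations become applicable to it.

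The main obstacle I anticipate is precisely this control of the exit from a pure-$A$ label: the unconditional rules of Observation~\ref{obs p2}(a)--(b) leave that transition under-determined, and one must use the conditional third rule to forbid both the mis-shaped block $(A,BC,C)$ and any block of length exceeding four. Making the repositioning via $3$-dipath transitivity interact correctly with the names $A,B,C$ (which are \emph{defined} from $c(u_0),c(u_1),c(u_2)$) is the subtle point, and I expect the remaining work to be an exhaustive, observation-driven case check that the block length is always $3$ or $4$.
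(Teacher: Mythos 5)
Your global architecture (types as intervals of the path $A\!-\!B\!-\!C$, the monotone rotation, cutting the cycle at pure-$C$/pure-$A$ boundaries and checking each block) matches the structure the paper extracts, but the step on which everything hinges is handled incorrectly. The rule you call ``$A$ cannot be followed by $C$'' is false at the level of types: in every quad the second label meets $A$ and the third meets $C$. What is true, and what you actually need, is the refined statement that a \emph{pure}-$A$ label $c(u_i)=A^*_iD_i$ cannot be followed by a $C$-meeting label. Your plan to obtain this from Observations~\ref{obs p2}(c) and~\ref{obs p6}(c) by invoking $3$-dipath transitivity does not work: an automorphism of $C_r$ permutes vertices but leaves the colouring $c$ untouched, so repositioning the frame cannot manufacture the hypothesis $c(u_3)=A$ (an exact equality of colour sets, which in general fails because $c(u_3)$ may contain colours of $D$); and if you instead re-anchor $A,B,C,D$ at the block under consideration, you prove a statement about the \emph{new} sets, whereas ``triple'' and ``quad'' are defined relative to the original ones --- transporting the conclusion back is precisely Lemma~\ref{lem same-same}, which is proved after, and by means of, the present lemma, so the argument would be circular.

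The correct mechanism, and the one the paper uses, is pigeonhole plus the second colouring condition: once one knows $c(u_3)=A^*_3D_3$ (which follows unconditionally from $c(u_2)=C$ via your own rotation rules), any other pure-$A$ label $c(u_i)$ satisfies $c(u_3)\cap c(u_i)\neq\emptyset$, because both sets have size $b$ and lie in $A\cup D$ with $|A\cup D|<2b$ (Observation~\ref{obs p3}); applying the second condition of the $b$-fold colouring to the arcs $u_2u_3$ and $u_iu_{i+1}$ then yields $c(u_{i+1})\cap C=\emptyset$. You cite Observation~\ref{obs p3} elsewhere but not at this juncture, where it is indispensable. The same anchor is also needed for your opening claim that no type contains both $A$ and $C$: that does not follow from Observations~\ref{obs p2} and~\ref{obs p4} alone (those rules are consistent with a label of type $\{A,B,C\}$ flanked by a pure-$A$ and a pure-$C$ label), and again one must compare the flanking pure-$A$ label with $c(u_3)$ via Observation~\ref{obs p3}. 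The paper sidesteps both difficulties by running a single induction along the cycle from the base triple $(A,B,C)$, which makes $c(u_3)=A^*_3D_3$ and $c(u_k)=A^*_kD_k$ available exactly when the exclusion is needed; if you repair your argument with the pigeonhole step above, your block decomposition becomes a valid, mildly reorganized version of the same proof.
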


\begin{proof}
As $c(u_0) = A$,  $c(u_1) = B$, $c(u_2) = C$, the lemma is true for all $c(u_i)$ having $ i \in \{0,1,2 \}$.

Now assume that the lemma is true for all $c(u_i)$ having 
$ i \in \{0,1, \cdots k-1 \}$.
Observe that $c(u_{k-1}) = C^*_kD_k$ due to our assumption. 
Now we want to show that  $c(u_k)$ is part of  either a 
triple or a quad.

Note that $c(u_k) = A^*_kD_k$ due to 
Observation~\ref{obs p2} and~\ref{obs p6}.

If $c(u_{k+1}) \neq B_{\#}$, then it contradicts 
Observation~\ref{obs p4}. 
Thus $c(u_{k+1}) = A_{i+1}B^*_{i+1}C_{i+1}D_{i+1}$. 
However if $c(u_{k+1}) = C_{\#}$, 
then $c(u_2) \cap c(u_{k+1}) \neq \emptyset $.
On the other hand, as $c(u_3) = A^*_{3}D_{3}$, 
we have $c(u_3) \cap c(u_{k}) \neq \emptyset $.
This contradicts the second condition of the definition  
of $b$-fold oriented $k$-coloring. 
Therefore, 
$c(u_{k+1}) = A_{i+1}B^*_{i+1}D_{i+1}$.

Also  $c(u_{k+1}) = B_{\#}$ implies 
$c(u_{k+2}) \neq A_{\#}$ due to Observation~\ref{obs p2}.
Moreover,  $c(u_{k+2}) = C_{\#}$ due to Observation~\ref{obs p4} 
Therefore, 
either $c(u_{k+2}) = C^*_{i+2}D_{i+2}$ making 
$c(u_k)$ part of a triple, or 
$c(u_{k+2}) = B^*_{i+2}C^*_{i+2}D_{i+2}$. 
In the later case, 
we must have $c(u_{k+3}) = C^*_{i+3}D_{i+3}$ due to 
Observation~\ref{obs p2}. 
Also this implies that $c(u_{k+1}) = A_{\#}$ due to 
Observation~\ref{obs p4}. This makes $c(u_k)$ part of a quad. 
\end{proof}

The labels in the entire cycle can thus be viewed 
as a circular arrangement of triples and quads.  

Let $(c(u_t), c(u_{t+1}), c(u_{t+2}))$ be a triple.
If we rename  the vertices as $u_i = u_{i+t}$ for all 
   $i \in \mathbb{Z}/r \mathbb{Z}$ and some constant $t$, then 
the definitions of the sets $A, B, C$ and $D$ gets  changed accordingly. 
However, we observe that a triple remains a triple and a quad remains a quad.

\begin{lemma}\label{lem same-same}
If we rename the vertices as $u_i = u_{i+t}$ for all 
$i \in \mathbb{Z}/r \mathbb{Z}$ and some constant $t$
where $\Sigma = (c(u_t), c(u_{t+1}), c(u_{t+2}))$ is a triple, then
a triple remains a triple and a quad remains a quad. 
\end{lemma}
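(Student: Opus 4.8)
The plan is to exploit the fact that the renaming $u_i \mapsto u_{i+t}$ is nothing but a rotation, i.e.\ an automorphism of $C_r$. First I would observe that the shifted coloring $c'$ defined by $c'(u_i) = c(u_{i+t})$ is again a miser $b$-fold oriented $k$-coloring. Since $\Sigma$ is a triple, the three vertices $u_t, u_{t+1}, u_{t+2}$ induce a directed $2$-path, so Lemma~\ref{lem dipath} forces the new reference sets $A' = c(u_t)$, $B' = c(u_{t+1})$, $C' = c(u_{t+2})$ to be pairwise disjoint, each of size $b$, whence $|D'| = k - 3b < b$ for $D' = S \setminus (A' \cup B' \cup C')$. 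In particular $c'(u_0) = A'$, $c'(u_1) = B'$, $c'(u_2) = C'$ are \emph{pure} labels, exactly as in the original setup. Thus $c'$ together with $(A', B', C', D')$ satisfies all the standing hypotheses of this section, and the entire machinery --- Observations~\ref{obs p1}--\ref{obs p6} and Lemma~\ref{lem triple-quad} --- applies in the same way to $c'$, producing a decomposition of the (unchanged) circular sequence of labels into \emph{primed} triples and quads.

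It remains to show that this primed decomposition coincides with the original one; this is the heart of the matter. The useful observation is that the new reference sets $A', B', C'$ are themselves old labels, so every assertion about how a label $c(u_j)$ meets $A'$, $B'$ or $C'$ is really an assertion about a pairwise intersection $c(u_j) \cap c(u_{t+s})$ of two old labels. Such intersections are controlled by Lemma~\ref{lem dipath} when the two indices lie within distance $2$, and by the second condition in the definition of a $b$-fold oriented $k$-coloring --- together with the pigeonhole principle of Observation~\ref{obs p3} --- otherwise. The plan is therefore to take an arbitrary old triple $(A^*_i D_i,\, A_{i+1} B^*_{i+1} D_{i+1},\, C^*_{i+2} D_{i+2})$ (and, separately, an arbitrary old quad), translate each of the required conditions ``$\cap A' \neq \emptyset$'', ``$\cap B' = \emptyset$'', and so on into such cross-intersections, and verify that the resulting pattern is exactly that of a primed triple (resp.\ a primed quad).

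Equivalently, and perhaps more cleanly, both decompositions are produced by the \emph{same} deterministic left-to-right induction of Lemma~\ref{lem triple-quad}, so each is the unique decomposition determined by its origin. Since $\Sigma$ is simultaneously an old triple and --- by the base case of Lemma~\ref{lem triple-quad} applied to $c'$ --- the first primed block, the two decompositions already share the block boundary at $u_t$, which is the new $u_0$. It then suffices to show that the triple-versus-quad decision at every block boundary is reference-free: a block is a quad precisely when the label two steps past its start additionally meets the ``$B$'' role (a $B^*C^*D$-label) rather than only the ``$C$'' role (a $C^*D$-label). I expect the \textbf{main obstacle} to be exactly this step --- proving that this decision does not depend on whether the labels are read through $(A,B,C,D)$ or through $(A',B',C',D')$, and hence that the two decompositions cannot fall out of phase (which is a priori conceivable, since $3+4 = 4+3$). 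Pinning this down amounts to controlling the long-range cross-intersections above with enough precision, and this is where the second coloring condition must be pushed hardest.
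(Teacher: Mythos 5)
Your setup is right and matches the paper's framing: the shifted coloring satisfies all the standing hypotheses, the primed decomposition exists by the same induction, the two decompositions agree at the block boundary $u_t$, and the whole question reduces to whether they can fall out of phase at some later boundary. But the decisive step --- ruling out a first block at which the old and primed decompositions disagree --- is exactly the step you flag as the ``main obstacle'' and then leave unproved. As it stands this is a genuine gap: everything before it is bookkeeping, and the lemma is precisely the assertion that this desynchronization cannot happen, so a proof that stops at ``it suffices to show the triple-versus-quad decision is reference-free'' has not proved the lemma.

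The missing argument is short, and you already named its ingredients (cross-intersections of old labels, Observation~\ref{obs p3}). Let $\Lambda$ be the first block after $\Sigma$ that changes type, starting at $u_j$; since all earlier blocks agree, the primed block also starts at $u_j$, so if $\Lambda$ changes type then $u_{j+3}$ is the last vertex of the primed block when $\Lambda$ was an old triple (resp.\ the first vertex of the next primed block when $\Lambda$ was an old quad). Now compare one single pairwise intersection both ways. If $\Lambda$ was an old triple: in old notation $c(u_t)=A^*_tD_t$ and $c(u_{j+3})=A^*_{j+3}D_{j+3}$, so $c(u_t)\cap c(u_{j+3})\neq\emptyset$ by pigeonhole ($|A\cup D|<2b$); in primed notation $c(u_t)=A'$ is pure while $c(u_{j+3})=C'^{*}_{j+3}D'_{j+3}$ avoids $A'$, so the same intersection is empty --- contradiction. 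If $\Lambda$ was an old quad, run the identical comparison with $c(u_{t+2})$ (old: $C^*_{t+2}D_{t+2}$, primed: the pure set $C'$) against $c(u_{j+3})$ (old: $C^*_{j+3}D_{j+3}$, primed: $A'^{*}_{j+3}D'_{j+3}$). Note that no ``long-range control'' via the second coloring condition is needed here; a single application of the pigeonhole observation to one well-chosen pair closes the argument.
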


\begin{proof}
For convenience we will refer to every vertex as per their names before renaming throughout the proof.

After renaming, the definitions of the sets $A, B, C$ and $D$ changes. We will 
use the labels $c(u_{t}) = A', c(u_{t+1}) =B', c(u_{t+2})=C'$ 
and $D'= (A \cup B \cup C \cup D) \setminus  (A' \cup B' \cup C')$
for convenience.

Suppose the first triple/quad after $\Sigma$ that does not remain a triple/quad 
after renaming is $\Lambda $. We consider the following two cases.

\medskip

\noindent \textbf{Case 1:}  Suppose before renaming 
 $\Lambda = (c(u_{j}), c(u_{j+1}), c(u_{j+2}))$ was a triple.

Considering the situation before renaming, due to pigeonhole principle, we can say that 
$c(u_{t}) \cap c(u_{j+3}) = A^*_tD_t \cap A^*_{j+3}D_{j+3} \neq \emptyset $. 

On the other hand, considering the situation after renaming
$c(u_{t}) \cap c(u_{j+3}) = A' \cap C'^*_{j+3}D'_{j+3} = \emptyset $, a contradiction. Thus $\Lambda$ must be a triple even after renaming. 

\medskip

\noindent \textbf{Case 2:}  Suppose before renaming  $\Lambda = (c(u_{j}), c(u_{j+1}), c(u_{j+2}), c(u_{j+3}))$ 
  was a quad.

Considering the situation before renaming, due to pigeonhole principle, we can say that 
$c(u_{t+2}) \cap c(u_{j+3}) = C^*_{t+2}D_{t+2} \cap C^*_{j+3}D_{j+3} \neq \emptyset $.

On the other hand, considering the situation after renaming
$c(u_{t+2}) \cap c(u_{j+3}) 
= C' \cap A'^*_{j+3}D'_{j+3} = \emptyset $, a contradiction. 
Thus $\Lambda$ must be a quad even after renaming. 
\end{proof}

If the last element of a triple/quad $\Sigma$ is $c(u_i)$ and the first element of a triple/quad $ \Lambda $ is $c(u_{i+1})$, then 
$ \Sigma $ and $ \Lambda $ are \textit{consecutive} where 
$ \Sigma $ is \textit{before} $ \Lambda $ and  
$ \Lambda $  is \textit{after} $ \Sigma $.

\begin{lemma}\label{no-consecutive-triples}
There are no consecutive triples. 
\end{lemma}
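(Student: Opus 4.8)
The plan is to argue by contradiction. Suppose two triples $\Sigma$ and $\Lambda$ are consecutive with $\Sigma$ before $\Lambda$. Using Lemma~\ref{lem same-same} I would first re-root the cycle at $\Sigma$, renaming vertices so that $\Sigma=(c(u_0),c(u_1),c(u_2))=(A,B,C)$ and $\Lambda=(c(u_3),c(u_4),c(u_5))$; by the definition of a triple this forces $c(u_3)=A^*_3D_3$, $c(u_4)=A_4B^*_4D_4$ and $c(u_5)=C^*_5D_5$. Since the whole cycle is tiled by triples and quads (Lemma~\ref{lem triple-quad}) and consecutive triples force $r\geq 10$, the block immediately preceding $\Sigma$ is a genuine block ending at $u_{r-1}$, so $c(u_{r-1})=C^*_{r-1}D_{r-1}$, i.e. $c(u_{r-1})\subseteq C\cup D$.

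The key local step I would isolate is that the middle label of $\Lambda$ must avoid $A$, that is $c(u_4)\cap A=\emptyset$. Indeed, suppose $c(u_4)\cap A\neq\emptyset$; as $c(u_0)=A$ this gives $c(u_4)\cap c(u_0)\neq\emptyset$. Applying the second condition of a $b$-fold oriented $k$-coloring to the arcs $u_4u_5$ and $u_{r-1}u_0$ (taking $x=u_4,y=u_5,z=u_{r-1},w=u_0$) forces $c(u_5)\cap c(u_{r-1})=\emptyset$. But $c(u_5)=C^*_5D_5$ and $c(u_{r-1})=C^*_{r-1}D_{r-1}$ are both of the form $C_\bullet D_\bullet$, so Observation~\ref{obs p3} gives $c(u_5)\cap c(u_{r-1})\neq\emptyset$, a contradiction. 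Hence $c(u_4)=B^*_4D_4\subseteq B\cup D$, so $\Lambda$ is a ``clean'' copy of $\Sigma$ up to colours of $D$.

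The remaining, and in my view harder, part is to convert this local rigidity into a global contradiction. Re-rooting at $\Lambda$ via Lemma~\ref{lem same-same} and repeating the previous step shows that every triple in a maximal run of consecutive triples has a clean middle; thus such a run realises the periodic pattern $A,B,C,A,B,C,\dots$ (contaminated only by $D$) and advances the position by a multiple of $3$ while returning to the same triple of colour classes. Since $r\not\equiv 0\bmod 3$, not every block can be a triple, so a maximal run of triples is both preceded and followed by a quad. The plan is then to compare, through the second colouring condition and Observation~\ref{obs p3}, the pairwise-disjoint $D$-portions carried by the block-starts $A^*D$ and block-ends $C^*D$ along the run with those forced by the adjacent quads; because $|D|<b$ these $D$-portions are scarce, and the orientation constraints across the successive boundary arcs over-determine their placement once two triples sit side by side.

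I expect the main obstacle to be exactly this last bookkeeping. A purely local inspection of the six vertices of $\Sigma\cup\Lambda$ is \emph{consistent} — one can satisfy every directed-two-path and orientation constraint on them simultaneously — so the contradiction cannot be extracted from the two triples alone and must come from how the few colours of $D$ are forced to recur around the \emph{whole} cycle, together with the parity obstruction $r\not\equiv 0\bmod 3$. Making the $D$-colour accounting precise, rather than the clean-middle reduction, is where the real difficulty lies, and it is presumably also the point at which the dependence on the prime factorisation of $r$ begins to surface.
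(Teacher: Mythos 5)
Your opening reduction is correct and in fact coincides with the first step of the paper's own argument: after re-rooting so that $\Sigma=(A,B,C)$, the pigeonhole intersection $c(u_{-1})\cap c(u_5)=C^*_{-1}D_{-1}\cap C^*_5D_5\neq\emptyset$ together with the second colouring condition forces $c(u_0)\cap c(u_4)=\emptyset$, i.e.\ the ``clean middle'' $c(u_4)=B^*_4D_4$. But from that point on your proof stops: the global ``$D$-colour accounting around the whole cycle'' is only announced, not carried out, so the contradiction is never actually derived. Worse, the premise guiding that plan --- that a contradiction ``cannot be extracted from the two triples alone and must come from the whole cycle'' --- is mistaken. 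No global bookkeeping is needed; what is needed is two more vertices, namely the tail of the block \emph{preceding} $\Sigma$, and the right choice of which pair of consecutive triples to look at.

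Concretely, the missing idea is this. Since $r\not\equiv 0\bmod 3$ there is at least one quad, so you may take $\Sigma$ to be the \emph{first} triple of a maximal run of consecutive triples; then the block before $\Sigma$ is a quad, whose last two labels are $c(u_{-2})=B^*_{-2}C^*_{-2}D_{-2}$ and $c(u_{-1})=C^*_{-1}D_{-1}$. (Your set-up allows the preceding block to be a triple, in which case $c(u_{-2})$ need not meet $C$ and the following chain breaks.) Now iterate implication~(\ref{2dipath-equation}): from $c(u_0)\cap c(u_3)=A\cap A^*_3D_3\neq\emptyset$ (pigeonhole) one gets $c(u_{-1})\cap c(u_4)=\emptyset$; from $c(u_{-1})\cap c(u_2)\neq\emptyset$ and $c(u_{-2})\cap c(u_2)\neq\emptyset$ one gets $c(u_{-2})\cap c(u_3)=c(u_{-3})\cap c(u_3)=\emptyset$, hence $c(u_{-1})\cap c(u_3)\neq\emptyset$ by Observation~\ref{obs p4}, hence $c(u_{-2})\cap c(u_4)=\emptyset$. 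Combined with your $c(u_0)\cap c(u_4)=\emptyset$, the three pairwise-disjoint sets $c(u_{-2}),c(u_{-1}),c(u_0)$, which occupy $3b$ colours, all avoid the $b$ colours of $c(u_4)$, forcing $k\geq 4b$ and contradicting the miserliness of the colouring. Without this (or an equivalent) finishing step, your submission is an observation plus a programme, not a proof.
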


\begin{proof}
As $r \not\equiv 0 \bmod 3$, there must be at least one quad. 
Thus if there are  consecutive triples, then there 
exists a quad $\Sigma_1$ and two triples $\Sigma_2, \Sigma_3$ 
such that $\Sigma_1, \Sigma_3$  are, respectively, before and after $\Sigma_2$.

As the sets $A, B$  and $C$ are defined based on the set of colors assigned to the vertices $u_0, u_1$ and $u_2$, respectively, 
and as $C_r$ is $3$-dipath transitive, we may assume that 
$\Sigma_2 = (c(u_0), c(u_1), c(u_2)) = (A,B,C)$. 
Therefore, 
$\Sigma_1 = (A^*_{-4}D_{-4}, A^*_{-3}B^*_{-3}D_{-3},
B^*_{-2}C^*_{-2}D_{-2},  C^*_{-1}D_{-1})$
and 
$\Sigma_3 = (A^*_{3}D_{3},A_4B^*_{4}D_{4},C^*_{5}D_{5})$.

 Note
that the following constraints follow directly from the 
definition of $b$-fold oriented coloring. 
For any distinct $i,j \in \mathbb{Z}/r \mathbb{Z}$

\begin{equation}\label{2dipath-equation}
c(u_i) \cap c(u_j) \neq \emptyset \implies 
c(u_{i-1}) \cap c(u_{j+1}) = \emptyset \text{ and } 
c(u_{i+1}) \cap c(u_{j-1}) = \emptyset.
\end{equation}

Observe that 
$c(u_{-1}) \cap c(u_5) = C_{-1} D_{-1} \cap C_5 D_5 \neq \emptyset $
by pigeonhole principle. This implies
$ c(u_{0}) \cap c(u_4) = \emptyset $ by 
$ eq^n$~(\ref{2dipath-equation}).

Furthermore $c(u_0) \cap c(u_3) = A \cap A^*_3 D_3 \neq \emptyset $.
This implies
$ c(u_{-1}) \cap c(u_4) = \emptyset $ by 
$ eq^n$~(\ref{2dipath-equation}).

By $ eq^n$~(\ref{2dipath-equation}) we have 
$$c(u_{-1}) \cap c(u_2) = C^*_{-1}D_{-1} \cap C \neq \emptyset 
\implies  c(u_{-2}) \cap c(u_3) = \emptyset $$ and
$$c(u_{-2}) \cap c(u_2) = B^*_{-2}C^*_{-2}D_{-2} \cap C \neq \emptyset 
\implies  c(u_{-3}) \cap c(u_3) = \emptyset .$$
As $c(u_{-2}) \cap c(u_3) = c(u_{-3}) \cap c(u_3) = \emptyset $, Lemma~\ref{lem dipath} implies 
$ c(u_{-1}) \cap c(u_3) \neq \emptyset $.
This implies $ c(u_{-2}) \cap c(u_4) = \emptyset $.

Thus  $c(u_{-2}) \cup c(u_{-1}) \cup c(u_{0}) \subseteq 
(A \cup B \cup C \cup D) \setminus c(u_4)$. This is a contradiction to Lemma~\ref{lem dipath} as     
$|(A \cup B \cup C \cup D) \setminus c(u_4)| < 3b $.
\end{proof}

Therefore, there are some positive number of quads in between any two triples.  
In order to analyze the number of quads in between two triples,  we construct
a symmetric binary matrix $T$ of dimension $r \times r$ such that 

\begin{equation}\label{T-matrix}
T[i,j]=1 \text{ if and only if } c(u_i) \cap c(u_j) \neq\emptyset
\text{ for all } i,j \in \mathbb{Z}/r \mathbb{Z}.
\end{equation}

Thus $T[i,j]=1$  implies $T[i+1,j-1]=T[i-1,j+1]=0$
by $eq^n$~(\ref{2dipath-equation}).
The $i^{th}$ row of the binary matrix $T$ is denoted by $T_i$.
Three consecutive bits along a row or a column cannot be  all $0$s or all $1$s due to Observations~\ref{obs p4} and~\ref{obs p5}.

A \textit{cyclic right-shift} is the operation of rearranging the entries in a vector  by moving the final entry to the first position, while shifting all other entries to the next position.

Now we will present an important property of the binary matrix $T$.

\begin{lemma}\label{cyclic-shift}
The vector $T_{i+1}$ is obtained by a cyclic right-shift on  $T_i$.
\end{lemma}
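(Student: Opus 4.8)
The plan is to recast the statement in the language of circulant matrices. Since the $j$-th entry of the cyclic right-shift of $T_i$ is $T[i,j-1]$, the lemma is equivalent to the identity $T[i+1,j]=T[i,j-1]$ for all $i,j\in\mathbb{Z}/r\mathbb{Z}$; and as the index pairs $(i+1,j)$ and $(i,j-1)$ lie on the same diagonal $\{(a,b):b-a\equiv j-i-1\}$, this says precisely that $T$ is \emph{circulant}, i.e.\ that $T[i,j]$ is constant along each diagonal. I would therefore reduce the whole lemma to the single one-sided implication
\[
T[i,j]=1 \implies T[i+1,j+1]=1 \qquad\text{for all } i,j.
\]
Read along a fixed diagonal, this says that in the cyclic binary sequence $\bigl(T[i,i+d]\bigr)_i$ no $1$ is ever followed by a $0$; a cyclic binary sequence with no descent $1\to 0$ is constant, so every diagonal is all-$0$ or all-$1$ and $T$ is circulant.

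The heart of the argument is a short pigeonhole sub-claim, which I would isolate first:
\[
c(u_a)\cap c(u_b)\neq\emptyset \ \text{ and }\ c(u_{a+1})\cap c(u_{b+1})=\emptyset \ \implies\ c(u_{a+1})\cap c(u_b)\neq\emptyset.
\]
To prove it, apply Equation~(\ref{2dipath-equation}) to $c(u_a)\cap c(u_b)\neq\emptyset$ to get $c(u_{a+1})\cap c(u_{b-1})=\emptyset$; together with the hypothesis $c(u_{a+1})\cap c(u_{b+1})=\emptyset$, the $b$-set $c(u_{a+1})$ avoids the $2b$ colors of $c(u_{b-1})\cup c(u_{b+1})$, which are disjoint by Lemma~\ref{lem dipath}. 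Since $k<4b$, the complement of $c(u_{b-1})\cup c(u_{b+1})$ has fewer than $2b$ colors; it contains all $b$ colors of $c(u_b)$ (again disjointness via Lemma~\ref{lem dipath}), hence fewer than $b$ colors outside $c(u_b)$. A $b$-subset of this complement cannot avoid $c(u_b)$, giving $c(u_{a+1})\cap c(u_b)\neq\emptyset$.

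Next I would establish the diagonal implication by contradiction. Assume $T[i,j]=1$ but $T[i+1,j+1]=0$. The sub-claim with $(a,b)=(i,j)$ yields $T[i+1,j]=1$, and Equation~(\ref{2dipath-equation}) applied to this gives $c(u_i)\cap c(u_{j+1})=\emptyset$, i.e.\ $T[i,j+1]=0$. On the other hand, using the symmetry $T[x,y]=T[y,x]$, the sub-claim with $(a,b)=(j,i)$ (whose hypotheses $T[j,i]=1$ and $T[j+1,i+1]=0$ are exactly the transposed assumptions) yields $T[j+1,i]=1$, i.e.\ $T[i,j+1]=1$, contradicting $T[i,j+1]=0$. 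Hence the implication holds, $T$ is circulant, and the cyclic right-shift property follows.

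The main obstacle is conceptual rather than computational: it lies in spotting the right reduction. It is not obvious a priori that the two-sided cyclic-shift statement collapses to the one-sided diagonal implication, nor that this implication admits such a short pigeonhole proof rather than requiring the full triple/quad case analysis developed earlier. Once the reduction is in hand, the counting is routine, depending only on the miser inequality $k<4b$, Lemma~\ref{lem dipath}, and Equation~(\ref{2dipath-equation}); the symmetry of $T$ is what lets the contradiction close without any appeal to the block decomposition.
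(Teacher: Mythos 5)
Your proof is correct and follows essentially the same route as the paper's: both rest on Equation~(\ref{2dipath-equation}) together with the pigeonhole consequence of $k<4b$ and Lemma~\ref{lem dipath} that three consecutive entries of a row or column of $T$ cannot all be $0$, and both derive a contradiction from a discrepancy along a diagonal of $T$. Your reduction to the one-sided implication $T[i,j]=1\Rightarrow T[i+1,j+1]=1$ (via constancy of descent-free cyclic binary sequences) and the appeal to the symmetry of $T$ merely repackage the paper's two cases into one, with your sub-claim playing the role of the paper's ``no three consecutive zeros'' observation.
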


\begin{proof}
We have to show that $T[i,j]=T[i+1,j+1]$ 
for any $i,j \in \mathbb{Z}/r \mathbb{Z}$. 
Suppose the contrary, that is, 
$T[i,j] \neq T[i+1,j+1]$ for some
$i,j \in \mathbb{Z}/r \mathbb{Z}$.

First assume that $T[i,j] = 0$ and $T[i+1,j+1]= 1$. This implies 
 $T[i+2,j]=T[i,j+2]=0$ by $eq^n$~(\ref{2dipath-equation}).
 Moreover, to avoid three consecutive $0$s in the $i^{th}$ row
 we have  $T[i,j+1]=1$. This implies $T[i+1,j]=0$ by $eq^n$~(\ref{2dipath-equation}). 
 Now the $j^{th}$ column 
 has three consecutive $0$s contradicting Observation~\ref{obs p4}. 
 Thus $T[i,j] = 0$ and $T[i+1,j+1]= 1$ is not possible.

 Similarly it can be shown that it is not possible to have 
 $T[i,j]=1$ and $T[i+1,j+1]=0$.
\end{proof}

Let $\Sigma_1$ be a triple and after that there are $q$ quads after which there is a triple $\Sigma_2$. 
Then we say that $\Sigma_2$ is \textit{$q$-quads after} $\Sigma_1$.

\begin{lemma}\label{constant-triple-separation}
If $\Sigma_3$ is $q_2$-quads after $\Sigma_2$ and 
$\Sigma_2$ is $q_1$-quads after $\Sigma_1$, then $q_2 = q_1$. 
\end{lemma}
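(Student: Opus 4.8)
The plan is to exploit the circulant structure coming from Lemma~\ref{cyclic-shift} together with the symmetry of $T$, and to show that the positions of the triples around the cycle are symmetric about any chosen one of them; reflecting about $\Sigma_2$ will then force the gaps on its two sides to coincide.

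First I would normalize the picture. Since $C_r$ is $3$-dipath transitive and the sets $A,B,C$ are defined by the base triple, I may rename the vertices (justified by Lemma~\ref{lem same-same}) so that $\Sigma_2 = (c(u_0),c(u_1),c(u_2)) = (A,B,C)$ occupies positions $0,1,2$. By Lemma~\ref{cyclic-shift} the matrix $T$ is circulant, so $T[i,j]$ depends only on $j-i$; write $\tau(\ell) := T[0,\ell]$, whence $T[i,j] = \tau(j-i)$. Because $T$ is symmetric by~(\ref{T-matrix}), one has $\tau(\ell) = \tau(-\ell)$, i.e. $\tau$ is even. The key translation is that, for every $\ell$, the triple of Booleans recording whether $c(u_\ell)$ meets $A$, $B$, $C$ equals $(\tau(\ell),\tau(\ell-1),\tau(\ell-2))$, since meeting $A=c(u_0)$, $B=c(u_1)$, $C=c(u_2)$ is exactly $T[0,\ell], T[1,\ell], T[2,\ell]$.

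Next I would read the block structure off $\tau$. Using the labels of Lemma~\ref{lem triple-quad}, a triple $(A^*_pD_p, A_{p+1}B^*_{p+1}D_{p+1}, C^*_{p+2}D_{p+2})$ forces the membership patterns $(1,0,0),(0,1,0),(0,0,1)$ on $u_p,u_{p+1},u_{p+2}$ (in particular the middle label carries no $A$, since $u_{p+2}$ carries no $B$), which means $\tau$ reads $\ldots 0\,0\,1\,0\,0\ldots$ around position $p$; likewise a quad forces $\ldots 0\,0\,1\,1\,0\,0\ldots$. As every label lies in a block by Lemma~\ref{lem triple-quad} and no three consecutive entries of $\tau$ are constant by Observations~\ref{obs p4} and~\ref{obs p5}, the vector $\tau$ is exactly a cyclic concatenation of the strings $100$ and $1100$: its maximal runs of $1$'s have length $1$ (triples) or $2$ (quads), and its runs of $0$'s all have length $2$. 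Thus the set $P\subseteq \mathbb{Z}/r\mathbb{Z}$ of triple-start positions is precisely the set of isolated $1$'s of $\tau$. This identification is the main obstacle: it is where the purely local observations must be assembled into a global combinatorial description of $\tau$, and it is exactly what makes the symmetry usable.

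Finally I would conclude by reflection. As $\tau$ is even, the map $\ell \mapsto -\ell$ preserves $\tau$ and hence sends isolated $1$'s to isolated $1$'s; therefore $P = -P$, i.e. the triple-starts are symmetric about $\Sigma_2$ (position $0$). By Lemma~\ref{no-consecutive-triples} there is at least one quad on each side of $\Sigma_2$, so $\Sigma_3$ is the nearest triple-start to the right, at position $g_2 = 3+4q_2$, while $\Sigma_1$ is the nearest to the left, at position $-g_1 = -(3+4q_1)$. Since reflection is an order-reversing involution fixing $0$, it carries the nearest right triple-start to the nearest left one, giving $-g_2 = -g_1$; hence $3+4q_2 = 3+4q_1$ and $q_2 = q_1$, as claimed. (The same symmetry applied with \emph{every} triple taken as base in fact yields $P = p + P$ for each $p \in P$, so $P$ is a subgroup of $\mathbb{Z}/r\mathbb{Z}$ and the triples are globally equally spaced.)
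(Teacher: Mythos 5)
Your proof is correct, and while it rests on the same engine as the paper's argument --- Lemma~\ref{cyclic-shift} combined with the symmetry of $T$, which together say that every row of $T$ is a palindrome about its diagonal entry --- you deploy that engine quite differently. The paper assumes $q_1<q_2$, extracts a single mirrored pair of entries (the middle of $\Sigma_1$ avoids $C$, so $T[-i-1,2]=0$, hence $T[2,i+5]=0$ by the shift), and contradicts the fact that $u_{i+5}$ would then be the tail $C^*D$ of a quad; it is a short, local contradiction. You instead give a complete combinatorial description of the row $\tau=T_0$ as a cyclic word in the blocks $100$ (triples) and $1100$ (quads), identify triple-starts with the isolated $1$'s, and read off $q_1=q_2$ from the evenness $\tau(\ell)=\tau(-\ell)$. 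Your route is longer but more informative: it makes the global structure of $T$ explicit and directly yields the equal spacing of triples, i.e.\ $r=(4q+3)t$, which is exactly what the paper needs in the next step. The delicate points are handled correctly --- in particular you rightly note that the middle label of a triple must avoid $A$, deriving $\tau(p+1)=T[1,p+2]=0$ from $c(u_{p+2})=C^*_{p+2}D_{p+2}$ via the circulant property, which is needed since the paper's notation $A_{p+1}B^*_{p+1}D_{p+1}$ leaves the $A$-membership a priori open. Only your closing parenthetical is slightly loose: reflection about $0$ composed with reflection about $p$ gives invariance of $P$ under translation by $2p$, not by $p$, so the subgroup claim needs one more line; but nothing in the lemma depends on it.
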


\begin{proof}
Without loss of generality let 
$\Sigma_1=(u_{-i-2},u_{-i-1},u_{-i})$, 
$\Sigma_2=(u_0,u_1,u_2)$, $\Sigma_3=(u_j,u_{j+1},u_{j+2})$
and $q_1 < q_2$.

As  $c(u_{-i-1}) \neq C_{\#}$, we have
$c(u_{-i-1}) \cap c(u_2) = \emptyset $ 
and thus due to Lemma~\ref{cyclic-shift}
 $c(u_{2}) \cap c(u_{i+5}) =  \emptyset $.
 However, note that as $q_1 < q_2$, $c(u_{i+5})$ is the fourth element of a quad. 
 Thus  $c(u_{i+5}) = C^*_{i+5}D_{i+5}$ implying 
$c(u_{2}) \cap c(u_{i+5}) \neq  \emptyset $, a contradiction.
\end{proof}

Note that there is at least one triple due to the way the sets
 $A, B, C$ are defined.
If there are $t$ triples and if a triple is $q$-quads after another, then $r=(4q+3)t$.
Therefore, the next lemma follows directly.

\begin{lemma}\label{lem typeB none}
There is no  miser $b$-fold oriented $k$-coloring of $C_r$ unless $r$ is divisible by $(4q+3)$ for some $q \in \mathbb{N}$.
\end{lemma}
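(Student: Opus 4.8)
The plan is to assemble the structural results already proved into a short counting argument, so I would not expect to introduce any genuinely new idea here. First I would suppose that a miser $b$-fold oriented $k$-coloring $c$ of $C_r$ exists and read off the global shape of its label sequence. By Lemma~\ref{lem triple-quad} every label $c(u_i)$ belongs to a triple or a quad. Since a triple occupies three consecutive vertices and a quad four consecutive vertices, and since the inductive construction in that lemma lays these blocks down one after another starting from $(c(u_0),c(u_1),c(u_2))$, the blocks partition $V(C_r)$ into disjoint consecutive segments arranged cyclically. Thus the coloring corresponds to a circular word in the two symbols triple and quad.

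Next I would pin down the spacing of the triples. The setup guarantees at least one triple, namely $(c(u_0),c(u_1),c(u_2))=(A,B,C)$, and Lemma~\ref{no-consecutive-triples} rules out two triples being adjacent, so every triple is followed by a positive number of quads before the next triple. Lemma~\ref{constant-triple-separation} then forces this number to be one and the same constant $q\ge 1$ for every pair of consecutive triples around the cycle. Hence the circular word is perfectly periodic: it consists of $t$ repetitions of the unit ``one triple followed by $q$ quads,'' where $t\ge 1$ is the total number of triples.

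The final step is the length count. Each such unit spans $3+4q$ vertices (three for the triple and four for each of the $q$ quads), and there are $t$ of them filling the entire cycle, so $r = t(3+4q) = (4q+3)t$. In particular $(4q+3)\mid r$ for this value of $q$, which is exactly the asserted conclusion; equivalently, if $r$ is divisible by no integer of the form $4q+3$, then no miser coloring can exist.

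As for difficulty, the real work of this lemma has already been discharged by Lemmas~\ref{lem triple-quad}, \ref{no-consecutive-triples} and~\ref{constant-triple-separation}, so what remains is routine. The only point I would treat with explicit care is the claim that the triples and quads tile $C_r$ as disjoint consecutive blocks rather than overlapping or leaving gaps; here I would lean on the stepwise, non-overlapping manner in which Lemma~\ref{lem triple-quad} produces each successive block, which is what makes the per-unit vertex count $3+4q$ exact and the product multiply cleanly to $r$.
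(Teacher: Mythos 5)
Your proposal is correct and follows essentially the same route as the paper: the paper likewise notes that at least one triple exists, invokes Lemmas~\ref{lem triple-quad}, \ref{no-consecutive-triples} and~\ref{constant-triple-separation} to get a periodic tiling by one triple followed by a constant number $q$ of quads, and concludes $r=(4q+3)t$. Your write-up merely makes explicit the tiling and chaining steps that the paper leaves as ``follows directly.''
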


Suppose that $r$ is divisible by $(4q+3)$ for some 
$q \in \mathbb{N}$. 
Furthermore, let $q'$ is the  smallest integer
such that $(4q'+3)$   divides $r$. Then 
observe that $(4q'+3)$ is a prime number. Therefore,
if a $C_r$ admits a miser $b$-fold oriented $k$-coloring then $r$ must have a 
type-$A$ prime factor. 

Furthermore 
we will show that if $r$ is divisible by a type-$A$ prime $p$, then  
$\chi^*_o(C_r) \leq 4 - \frac{4}{p+1}$. In fact we will provide a
particular coloring to prove that.

\begin{lemma}\label{lem typeA upper} 
If a type-$A$ prime $p$ divides $r$, 
then $\chi^*_o(C_r) \leq 4 - \beta(r)$.
\end{lemma}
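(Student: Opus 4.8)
The plan is to prove the lemma by exhibiting an explicit, extremely simple colouring. Let $p$ denote the \emph{least} type-A prime factor of $r$ (this exists under the hypothesis, and it is the one for which $\beta(r)=\frac{4}{p+1}$). Since $p\equiv 3\bmod 4$, the target ratio reduces nicely:
$$4-\beta(r)=4-\frac{4}{p+1}=\frac{4p}{p+1}=\frac{p}{(p+1)/4},$$
where $b:=\frac{p+1}{4}$ is a positive integer. So I aim to produce a $b$-fold oriented $p$-colouring, i.e. I take the colour set to be $\mathbb{Z}/p\mathbb{Z}$ and assign $b$ colours to each vertex, with total ratio exactly $\frac{p}{b}=4-\beta(r)$.

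First I would reduce from $C_r$ to $C_p$. Because $p\mid r$, the map $\phi(u_i)=u_{i\bmod p}$ sends each arc $u_iu_{i+1}$ to the arc $u_{i\bmod p}u_{(i+1)\bmod p}$, hence is a homomorphism $C_r\to C_p$; by the homomorphism-monotonicity of $\chi^*_o$ (the opening Proposition of Section~\ref{sec pre}) it suffices to bound $\chi^*_o(C_p)$. On $C_p$ I propose the arithmetic \emph{band} colouring
$$c(u_i)=\{\,bi+1,\,bi+2,\,\ldots,\,bi+b\,\}\pmod p,\qquad i\in\mathbb{Z}/p\mathbb{Z},$$
in which each vertex receives a block of $b$ consecutive colours and consecutive vertices shift the block forward by its own length $b$. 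For $p=7$, $b=2$ this is exactly the colouring of Example~\ref{example 7-cycle}, and in general it realises one triple followed by $q=\frac{p-3}{4}$ quads per block. The whole verification rests on one observation: two bands at offsets $bi$ and $bj$ intersect if and only if $b(i-j)\bmod p$ lies in $\{-(b-1),\ldots,b-1\}$. Condition (i) and the distance-two disjointness (Lemma~\ref{lem dipath}) then follow at once, since three consecutive vertices occupy only $3b=\frac{3(p+1)}{4}\le p$ colours, so their bands are pairwise disjoint.

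The crux is condition (ii). For arcs $u_iu_{i+1}$ and $u_ju_{j+1}$ I must forbid having both $c(u_i)\cap c(u_{j+1})\neq\emptyset$ and $c(u_{i+1})\cap c(u_j)\neq\emptyset$. Writing $t:=b(i-j)\bmod p$, the intersection observation turns the first condition into $t\in\{1,\ldots,2b-1\}$ and the second into $t\in\{p-2b+1,\ldots,p-1\}$. The argument succeeds precisely because these two arithmetic intervals are disjoint, and this is exactly where the type-A hypothesis enters: since $p=4b-1$ we have $p-2b+1=2b>2b-1$, so the intervals cannot overlap and condition (ii) is never violated. I expect this tight boundary identity $p=4b-1$ to be the only delicate point; everything else is routine interval arithmetic. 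Concluding, $c$ is a valid $b$-fold oriented $p$-colouring of $C_p$, whence $\chi^*_o(C_p)\le\frac{p}{b}=\frac{4p}{p+1}=4-\beta(r)$, and the homomorphism $\phi$ gives the same bound for $C_r$.
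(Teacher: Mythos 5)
Your proposal is correct and is essentially the paper's own proof: the paper also takes the least type-A prime $p=4k-1$ dividing $r$, sets $b=k=\frac{p+1}{4}$, and assigns to a vertex in residue class $i \bmod p$ the band $\{ik+1,\dots,ik+k\}$ modulo $p$, which is exactly your colouring pulled back along the homomorphism $C_r\to C_p$. The only difference is that the paper leaves the verification as an ``observe that,'' whereas you carry out the interval arithmetic explicitly and correctly identify $p-2b+1=2b$ as the point where the type-A hypothesis is used.
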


\begin{proof}
Assume that $p$ is the smallest type-$A$ prime that divides $r$.
Let $p = 4k-1$ and $r = pt$. Now we will provide a 
$k$-fold oriented $p$-coloring of $C_r$.

Let $f(x) = \{i \cdot k +1, i \cdot k +2, \cdots, i \cdot k +k \}$
where `+' is taken modulo $(4k-1)$,  $x \equiv i \bmod p$ and $i \leq p-1$ for all $x \in V(C_r)$. 
Observe  that 
$f$ is a $k$-fold oriented $p$-coloring of $C_r$. 
\end{proof}

Also we will show that the above upper bound is the best possible. 

\begin{lemma}\label{lem typeA lower} 
If a type-$A$ prime $p$ divides $r$, 
then $\chi^*_o(C_r) \geq 4 - \beta(r)$.
\end{lemma}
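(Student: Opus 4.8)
The plan is to show that every \emph{miser} $b$-fold oriented $k$-coloring of $C_r$ already satisfies $\tfrac{k}{b}\ge 4-\tfrac{1}{q+1}$, where $q$ is the constant number of quads separating consecutive triples guaranteed by Lemma~\ref{constant-triple-separation}. Since a non-miser coloring trivially has ratio at least $4$, and since every such $q$ obeys $(4q+3)\mid r$, the number $4q+3$ is at least the least divisor of $r$ of the form $4s+3$, which (as noted before Lemma~\ref{lem typeA upper}) is a prime, hence the least type-A prime factor $p=4q'+3$. As $4-\tfrac{1}{q+1}$ increases with $q$ and $q\ge q'$, this gives $\tfrac{k}{b}\ge 4-\tfrac{1}{q'+1}=4-\tfrac{4}{p+1}=4-\beta(r)$. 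Taking the infimum over $b$ then yields the lemma (non-miser $b$ contribute terms $\ge 4\ge 4-\beta(r)$), so everything reduces to the displayed bound for a fixed miser coloring.

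To prove that bound I would first pin down the matrix $T$ of~(\ref{T-matrix}). Using $3$-dipath transitivity I place the defining triple at $u_0,u_1,u_2$, so that $r=(4q+3)t$ and the cycle tiles into $t$ periods of length $4q+3$, each a triple followed by $q$ quads. Reading the explicit labels of Lemma~\ref{lem triple-quad}, the first row $g(m)=\mathbf{1}[c(u_m)\cap A\neq\emptyset]$ of the circulant matrix $T$ (Lemma~\ref{cyclic-shift}) equals $1$ at the $A^*D$ and $A^*B^*D$ positions and $0$ at the $C^*D$ and $B^*C^*D$ positions. The only positions whose type ($AB^*D$, the middle of a triple) does not settle $g$ are those $\equiv 1 \bmod (4q+3)$; here I would use that $T$ is symmetric and circulant, so $g(m)=g(-m)$, together with the fact that $-m\equiv 4q+2$ is a $C^*D$ position (forced $g=0$), to conclude $g(m)=0$ there as well. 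Thus $g$ is forced into the periodic pattern $[\,1,0,0,(1,1,0,0)^q\,]$, whose support modulo $4q+3$ is $R=\{0,3,4,7,8,\dots,4q-1,4q\}$.

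The counting step treats each color class as a set with restricted differences. For a color $z$, the class $S_z=\{i:z\in c(u_i)\}$ has $c(u_i)\cap c(u_j)\neq\emptyset$ for all $i,j\in S_z$, so $T[i,j]=1$ and hence every pairwise difference reduces mod $4q+3$ into $R$. I would bound any such set by $(q+1)t$: projecting to $\mathbb{Z}/(4q+3)\mathbb{Z}$, each fibre contributes at most $t$ elements, while the distinct residues that occur have all pairwise differences in $R^*=R\setminus\{0\}=\{m\in[1,4q]:m\equiv 0\text{ or }3 \bmod 4\}$. Translating the smallest residue to $0$, once an occurring residue is $\equiv 3 \bmod 4$ every larger one must also be (otherwise some difference is $\equiv 1 \bmod 4\notin R^*$), and this threshold structure leaves at most $q+1$ residues. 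Hence $|S_z|\le (q+1)t$ for every $z$, and summing $\sum_z|S_z|=rb$ over the $k$ colors gives $rb\le k(q+1)t$, i.e.\ $\tfrac{k}{b}\ge\tfrac{4q+3}{q+1}=4-\tfrac{1}{q+1}$, as required.

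The main obstacle is the exact determination of $g$, specifically forcing $g=0$ at the triple middles: their type $AB^*D$ leaves $c(u_m)\cap A$ genuinely undetermined, and a single surviving $1$ at a difference $\equiv 1 \bmod 4$ would inflate the residue bound beyond $q+1$ and destroy the estimate; the symmetry-of-$T$ argument is what closes this gap. The remaining work is routine: checking that the tiling aligns across all $t$ periods so that the label type depends only on position mod $4q+3$, verifying the pattern is consistent with~(\ref{2dipath-equation}) and Observations~\ref{obs p4}--\ref{obs p5}, and observing that the bound is met with equality by the coloring of Lemma~\ref{lem typeA upper}.
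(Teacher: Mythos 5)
Your proposal is correct, and although it ends at the same counting inequality $rb \le k(q+1)t$ as the paper, the route to it is genuinely different. The paper's proof is very short: a colour of $A$ can occur at most once inside any single triple or quad (its two admissible positions in a block are adjacent, hence carry disjoint label sets), there are $(q+1)t$ blocks, and ``by renaming, any colour can be thought of as a colour from $A$.'' You instead determine the first row of the circulant matrix $T$ completely --- using symmetry together with Lemma~\ref{cyclic-shift} to force $T[0,m]=0$ at the ambiguous triple-middle positions $m\equiv 1\bmod (4q+3)$ --- and then bound each colour class $S_z$ as a set of residues whose nonzero pairwise differences lie in $R^*=\{3,4,7,8,\dots,4q-1,4q\}$, getting at most $q+1$ residues per period and hence $|S_z|\le(q+1)t$. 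Your version is longer but buys rigour exactly where the paper is thinnest: a colour of $D$ can occur \emph{twice} inside a single quad (at its first and fourth positions, which is consistent with $3\in R$), and such a colour need not be a ``colour from $A$'' under any renaming permitted by Lemma~\ref{lem same-same}, since the first vertices of all triples carry labels contained in $A\cup D$; so the paper's once-per-block count does not transfer verbatim to all colours, whereas your difference-set count treats every colour uniformly. One step you should expand when writing this up: ``this threshold structure leaves at most $q+1$ residues'' needs the explicit estimate that if $u$ occurring residues are $\equiv 0$ and $v$ are $\equiv 3 \pmod 4$ (the former all preceding the latter after translation), then the largest residue is at least $4(u+v)-5$ while being at most $4q-1$, whence $u+v\le q+1$; the threshold property alone, without the range bound coming from $R^*\subseteq[3,4q]$, would only give $2q+1$.
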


\begin{proof}
Suppose a miser $b$-fold oriented $k$-coloring $c$ of $C_r$ has exactly $t$ triples
and $q$ quads in between two successive triples.
Therefore, $r = (4q+3)t$ and $\beta(r) \leq q+1$.

Note that a color from the set $A$ can be used at most once in a triple or a quad. Thus, a particular color from $A$ is used at most 
$(q+1)t$ times. 
As we can rename the vertices as $u_i = u_{i+t}$, any color used 
can be thought of as a color from $A$. Hence any color is used at most  $(q+1)t$ times. 

Thus $$rb \leq k(q+1)t 
\implies 4 - \frac{1}{(q+1)} \leq \frac{k}{b}
\implies 4 - \beta(r) \leq \frac{k}{b}.$$
This ends our proof.
\end{proof}

Now we are ready to prove Theorem~\ref{directed-cycles}(d).

\medskip

\noindent  \textit{Proof of Theorem~\ref{directed-cycles}(d).}
The proof directly follows from 
Lemma~\ref{lem typeB none}, \ref{lem typeA upper} and~\ref{lem typeA lower}. \hfill $ \square $

\section{Sparse planar graphs}\label{sec planar}
Recall that $\mathcal{P}_g$ denote the family of oriented planar graphs having girth at least $g$. 
Due to Theorem~\ref{directed-cycles} proved in the previous section, we know that 
$\chi^*_o(\mathcal{P}_{g}) \geq 4$ for all $g \geq 3$. A natural question to ask is, how close to $4$ can  $\chi^*_o(\mathcal{P}_{g})$ get as we increase the value of $g$. The next result is our attempt to answer that question.

\begin{theorem}\label{th epsilon-girth}
For any $\epsilon > 0$, there exists an integer $g_{\epsilon} < \infty$  
such that 
$4 \leq \chi^*_o(\mathcal{P}_{g_{\epsilon}}) \leq 4+ \epsilon$. 
\end{theorem}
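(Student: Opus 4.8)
The lower bound is immediate: for every $g$ the family $\mathcal{P}_{g}$ contains the directed cycle $C_{2^n}$ for any $n$ with $2^n \ge g$, and since $2^n > 5$, $2^n \not\equiv 0 \bmod 3$ and $\beta(2^n)=0$, Theorem~\ref{directed-cycles}(d) gives $\chi^*_o(C_{2^n}) = 4$. Hence $\chi^*_o(\mathcal{P}_{g}) \ge 4$ for every $g$, and the entire content of the statement is the upper bound $\chi^*_o(\mathcal{P}_{g_\epsilon}) \le 4 + \epsilon$. The plan is to realise this bound through a single homomorphism target. By Theorem~\ref{th kneser gen} it suffices to construct, for a suitable pair $(a,b)$ with $a/b \le 4 + \epsilon$, a consistent sub-orientation $\overrightarrow{KG}_{a,b}$ that receives a homomorphism from every member of $\mathcal{P}_{g_\epsilon}$; the homomorphism then yields a $b$-fold oriented $a$-coloring, and consistency of the target supplies the second coloring axiom automatically, so the discharging only has to produce an ordinary oriented homomorphism.

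First I would fix $k \ge \lceil 1/\epsilon \rceil$ and take $a = 4k+1$, $b = k$, so that $a/b = 4 + 1/k \le 4 + \epsilon$. The slack $4+1/k$, strictly above the value $4$ forced by the cycles $C_{2^n}$, is essential: no target of ratio below $4$ can receive all of $\mathcal{P}_{g}$. I would then build the target $T_k$ as a consistent sub-orientation of $KG_{4k+1,\,k}$ whose vertices are the cyclic length-$k$ intervals of $\mathbb{Z}_{4k+1}$ (as in the construction behind Lemma~\ref{lem typeA upper}), enriched by all further arcs between disjoint $k$-sets that an explicit vertex-transitive rule permits while preserving the consistency condition $x\cap z \ne \emptyset \Rightarrow y \cap w=\emptyset$. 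The cyclic interval skeleton keeps the ratio at $4+1/k$ and the oriented independence number large, while the added arcs are meant to make $T_k$ strongly connected with walks of every large length realising arbitrary orientation patterns, which is precisely what a homomorphism from a sparse source will demand.

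Next I would exploit sparsity. A planar graph of girth at least $g$ has maximum average degree below $2g/(g-2)$, which tends to $2$ as $g \to \infty$, so for $g_\epsilon$ large every $G \in \mathcal{P}_{g_\epsilon}$ is, after removing vertices of degree at most $1$, a union of very long \emph{threads} (paths of degree-$2$ vertices) meeting at a sparse set of branch vertices. I would run a standard discharging argument to show that such $G$ must contain one of a short list of reducible configurations, and then extend a partial homomorphism into $T_k$ across each configuration: a branch vertex is placed using the in/out-flexibility of $T_k$, and each incident thread is routed as an oriented walk of the prescribed length and orientation pattern joining the already-placed endpoints, which $T_k$ admits once $g_\epsilon$ is large relative to $k$. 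Choosing $k \ge \lceil 1/\epsilon\rceil$ and then $g_\epsilon$ large enough for the routing to succeed completes the upper bound, and with the lower bound above, the theorem.

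The main obstacle I anticipate is the simultaneous design and analysis of $T_k$: one must exhibit a consistent orientation of a large portion of $KG_{4k+1,k}$ — consistency being a rigid, pairwise constraint on arcs, unlike the purely independent-set condition of ordinary fractional colouring — that is at once cheap (ratio $4+1/k$, large oriented independence number) and flexible enough to absorb every sufficiently sparse oriented planar graph. Ensuring that the thread-routing stays compatible with the consistency condition globally around each cycle, and not merely locally at each branch vertex, is the delicate technical heart of the argument.
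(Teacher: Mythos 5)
Your lower bound is correct and matches the paper's. The upper bound, however, is a plan rather than a proof, and the two steps you leave open are precisely the entire content of the argument. First, your target is never actually defined: ``enriched by all further arcs between disjoint $k$-sets that an explicit vertex-transitive rule permits while preserving consistency'' specifies neither the arc set nor a verification that the result is a consistent sub-orientation, and consistency is exactly the constraint that could fail once you start adding arcs. The paper's resolution is that no enrichment is needed at all: taking the cyclic length-$m$ intervals of $\mathbb{Z}/(4m+1)\mathbb{Z}$ with $m=2^l$ and giving each vertex $x_i$ only the two out-neighbors $x_{i+m}$ and $x_{i+m+1}$ already works. Consistency (equivalently, that the intervals form an $m$-fold oriented $(4m+1)$-coloring of this target) is then a two-line computation: a violation would force $s+(l+l')=s$ with $2\le l+l'\le 4m$, impossible modulo $4m+1$. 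Your instinct to densify the target to gain ``flexibility'' works against you here, because every added arc imposes new pairwise consistency constraints.

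Second, the universality claim --- that every $G\in\mathcal{P}_{g_\epsilon}$ maps homomorphically into the target once $g_\epsilon$ is large --- is asserted via a discharging-and-thread-routing argument that you do not carry out, and this is where all the difficulty lives. The paper sidesteps it entirely by invoking a known black box (Proposition~\ref{prop nice}): if an oriented graph $T$ is $k$-nice, i.e.\ $N^{\alpha}(x)=V(T)$ for every $\{+,-\}^k$-pattern $\alpha$ and every vertex $x$, then every member of $\mathcal{P}_{5k-1}$ maps to $T$. Niceness of the sparse circulant target is then immediate from an expansion argument: the out- (and in-) neighborhood of any set of $t$ consecutive vertices is a set of $t+1$ consecutive vertices, so after $n-1$ steps of any sign pattern you reach all of $V(T_l)$ --- which is exactly the ``walks of every large length realising arbitrary orientation patterns'' property you correctly identify as necessary, but never establish for your (undefined) $T_k$. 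To repair your write-up you would either need to prove a niceness-type homomorphism theorem from scratch via discharging (a substantial independent result) or cite one; and you would need to replace the vague enriched target with a concrete consistent sub-orientation and verify both the coloring conditions and the expansion property for it.
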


We need to introduce a few notation and terminology for our proof. 
Given an arc $xy$ of an oriented graph $G$, $y$ is an \textit{out-neighbor} of $x$ and 
$x$ is an \textit{in-neighbor} of $y$. 
The set of all out-neighbors (resp., in-neighbors) of $x$ is denoted by 
$N^+(x)$ (resp., $N^-(x)$). Furthermore, given a vertex set $S \subseteq V(G)$ one can define the following
$$N^+(S) = \cup_{x \in S} N^+(x) \text{ and } N^-(S) = \cup_{x \in S} N^-(x).$$

Let $\alpha = (\alpha_1, \alpha_2, \cdots, \alpha_k) \in \{+,-\}^k$ 
be an $k$-vector. 
For $k=1$,  let us define $N^{\alpha}(x)=N^{\alpha_1}(x)$ for any $x \in V(G)$.
 After that, for $k \geq 2$, we can inductively 
define $N^{\alpha}(x) = N^{\alpha_1}(N^{\alpha'}(x))$ where 
$\alpha' = (\alpha_2, \alpha_3, \cdots, \alpha_k)$. 
An oriented graph $G$ is $k$-nice if for each $k$-vector $\alpha \in \{+,-\}^k$ and each vertex $x \in V(G)$ we have $N^{\alpha}(x) = V(G)$.
This is an important property in our context due to the following result.

\begin{proposition}\cite{nice}\label{prop nice}
If an oriented graph $T$ is $k$-nice, then any $G \in \mathcal{P}_{5k-1}$ admits a homomorphism to $T$. 
\end{proposition}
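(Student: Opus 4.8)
The plan is to argue by contradiction using a minimal counterexample, combined with a structural lemma that locates a long ``thread'' of degree-$2$ vertices, which is then reduced using the reachability guaranteed by $k$-niceness. The hypothesis enters only through the reduction, so I would first isolate the two facts about $k$-nice graphs that I need. First, a $k$-nice $T$ (which we may assume nonempty) has at least one in-neighbor and one out-neighbor at every vertex: since $N^{\alpha}(x)=N^{\alpha_1}(N^{\alpha'}(x))$ applies its innermost coordinate $\alpha_k$ to $x$ first, and $N^{\alpha}(x)=V(T)\neq\emptyset$, taking $\alpha_k=+$ forces an out-neighbor of $x$ and $\alpha_k=-$ forces an in-neighbor. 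Second, $T$ is then $j$-nice for every $j\ge k$: writing $N^{(\alpha_1,\dots,\alpha_{k+1})}(x)=N^{\alpha_1}\bigl(N^{(\alpha_2,\dots,\alpha_{k+1})}(x)\bigr)=N^{\alpha_1}(V(T))$ and using that every vertex has an $\alpha_1$-neighbor gives $N^{\alpha_1}(V(T))=V(T)$, and I induct on $j$.

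Next I would set up the reduction. Call a maximal path whose internal vertices all have degree $2$ a \emph{thread} of \emph{length} $m$ if it has $m$ arcs (hence $m-1$ internal vertices). I claim a minimal (by number of vertices) counterexample $G\in\mathcal{P}_{5k-1}$ has no vertex of degree at most $1$, is not a single directed cycle, and has no thread of length $\ge k$; each such configuration is reducible. A vertex of degree at most $1$ is deleted and reinserted using that every vertex of $T$ has both an in- and an out-neighbor. A directed cycle $C_r$ with $r\ge 5k-1$ is mapped by fixing an arbitrary image for one vertex and closing a walk of the right orientation pattern whose final $k$ steps are routed back to the start via $k$-niceness. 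For a thread $v_0v_1\cdots v_m$ with $m\ge k$, I delete the internal vertices to get $G'$, which is still planar with girth $\ge 5k-1$ (deleting vertices cannot create cycles), take a homomorphism $h$ of $G'$ by minimality, and extend it: with $h(v_0)$ and $h(v_m)$ fixed I need a length-$m$ walk in $T$ from $h(v_0)$ to $h(v_m)$ whose orientations match the thread, and since $T$ is $m$-nice we get $h(v_m)\in N^{\alpha}(h(v_0))=V(T)$ for the vector $\alpha$ read off the thread, the realizing walk supplying the images of $v_1,\dots,v_{m-1}$. Each case contradicts minimality.

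The technical heart is a purely structural lemma, and this is where the constant $5$ appears: \emph{every planar graph of minimum degree $\ge 2$ and girth $\ge g$ that is not a cycle contains a thread of length at least $\lceil g/5\rceil$.} I would suppress all degree-$2$ vertices to obtain a planar multigraph $H$ on the branch vertices, which has minimum degree $\ge 3$. A loop of $H$ is a single thread forming a cycle, hence of length $\ge g$, and a digon of $H$ is two threads forming a cycle, so one has length $\ge g/2\ge\lceil g/5\rceil$; in both cases we are done. Otherwise $H$ is simple, and a standard Euler-formula computation shows a simple planar graph of minimum degree $\ge 3$ has girth at most $5$. Taking a shortest cycle $Z$ of $H$, which uses at most $5$ threads, the corresponding cycle of $G$ has length $\ge g$ by the girth hypothesis, so by averaging one of its at most $5$ threads has length $\ge g/5$, giving a thread of length $\ge\lceil g/5\rceil$. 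With $g=5k-1$ this yields $\lceil(5k-1)/5\rceil=k$, a thread of length $\ge k$, contradicting the Claim and completing the argument.

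The step I expect to be most delicate is not the counting but the bookkeeping in the reduction: correctly translating the in/out orientations along a thread into the vector $\alpha$ (respecting that $N^{\alpha}$ applies its innermost coordinate first) and verifying that the reinserted walk does not clash with arcs already incident to $v_0$ and $v_m$ — it does not, because those arcs lie in $G'$ and are respected by $h$, whereas all new arcs lie strictly inside the thread. A secondary point needing care is that the suppression argument must be carried out inside a suitable $2$-connected block (after disposing of cut-vertices and the disconnected case), so that the faces of $G$ are genuinely bounded by cycles and the girth bound on the shortest cycle of $H$ is legitimate.
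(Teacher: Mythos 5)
The paper does not actually prove Proposition~\ref{prop nice}: it is imported from the cited reference \cite{nice}, so there is no in-paper argument to compare yours against. Judged on its own, your reconstruction is correct and follows the standard route for results of this kind. The two reachability consequences of $k$-niceness are derived properly (every vertex of $T$ has an in- and an out-neighbour, whence $T$ is $j$-nice for all $j \geq k$), and you correctly respect the paper's convention that $N^{\alpha}$ applies its innermost coordinate $\alpha_k$ to $x$ first. The minimal-counterexample reductions all work: deleting vertices preserves planarity and girth, a degree-$\leq 1$ vertex is reinserted using the in/out-neighbour fact, a lone oriented cycle of length $r \geq 5k-1$ is closed using $k$-niceness on its last $k$ arcs, and the extension along a thread with $m \geq k$ arcs needs only $m$-niceness together with the sign-vector reversal you flag (homomorphisms require no injectivity, so the ``clash'' worry is indeed vacuous). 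The structural lemma is also sound: loops and digons of the suppressed multigraph $H$ yield threads of length $\geq g$ and $\geq g/2$ respectively, and otherwise the classical Euler-formula fact that a simple planar graph of minimum degree $3$ has girth at most $5$ produces a cycle of $G$ composed of at most $5$ internally disjoint threads, hence a thread with at least $\lceil g/5 \rceil = k$ arcs when $g = 5k-1$; the arithmetic checks out, and connectivity plus ``not a cycle'' guarantee branch vertices exist.

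One remark on your closing caveat: the retreat into a $2$-connected block is both unnecessary and, as stated, hazardous — a cut vertex of $H$ with degree $3$ may have degree $2$ in each of its blocks, so restricting to a block can destroy the minimum-degree hypothesis your Euler count relies on. No such detour is needed. Since $\delta(H) \geq 3$, the graph $H$ has no pendant tree-like parts, so every face boundary walk of (each component of) $H$ contains a cycle and therefore has length at least the girth; the inequality $2|E(H)| \geq g(H) \cdot (\text{number of faces})$ and the girth-at-most-$5$ conclusion thus hold for $H$ exactly as it stands, bridges or not. With that caveat deleted rather than repaired, your proof is a complete and correct derivation of the cited proposition.
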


Next we need to construct a particular class of oriented graph
$T_l$ for all $l \geq 0$. Let $m = 2^l$ and let $n = 4m+1 = 2^{l+2} +1$. 
Consider the additive group 
$\mathbb{Z}/n\mathbb{Z}$ and let $V(T_l)$ be the set of all 
$m$-tuples of $\mathbb{Z}/n\mathbb{Z}$ having consecutive elements, that is, 
$$V(T_l) = \{x_i = (i, i+ 1, \cdots, i+m-1) :
i \in  \mathbb{Z}/n\mathbb{Z} \}$$ where the $+$ operation is 
taken modulo $n$.
Furthermore, $$A(T_l) = \{x_ix_j | i+m  \leq j  \leq i + m+1 \}.$$

Intuitively speaking, the vertex $x_i$ is represented by a set of $m$-tuples 
 $(i, i+ 1, \cdots, i+m-1)$ and it has  arcs to the vertices $x_{i+m}= (i+m, i+m +1, \cdots, i+2m-1)$ and $x_{i+m+1} = (i+m+1, i+m +2, \cdots, i+2m)$. Note that the elements of the tuple of $x_i$ are distinct from the elements of the tuple of $x_{i+m}$ and $x_{i+m+1}$. 
Thus $x_i$ has exactly two out-neighbors $x_{i+m}$ and $x_{i+m+1}$.
On the other hand, $x_{i-m}$ and $x_{i-m-1}$ has $x_i$ as their common out-neighbor. 
That is, $x_i$ has exactly two in-neighbors  $x_{i-m}$ and $x_{i-m-1}$.  
 For convenience we have depicted the  oriented graph $T_l$ for $l = 1$ in Fig.~\ref{figure:sample}.

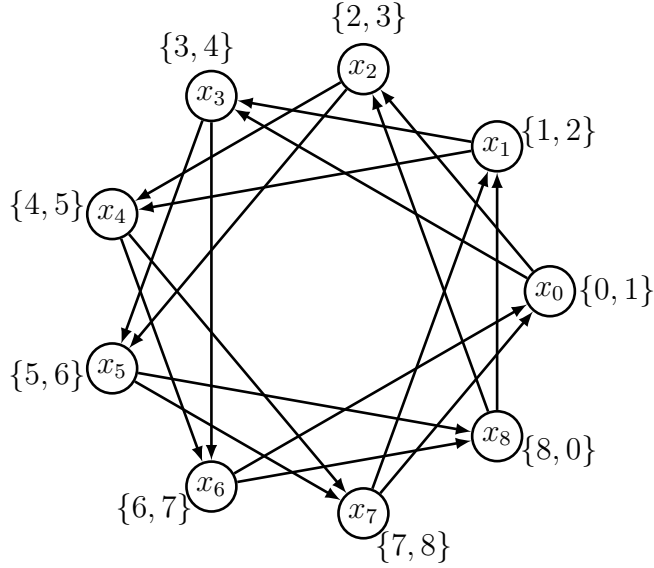
\begin{figure}
\centering
	\begin{tikzpicture}[inner sep=.7mm]
	
	\foreach \a in {0,...,8} 
	{
		\node[draw, circle, line width=1pt](v\a) at (\a*360/9:3cm){$x_\a$};	
	}

	\node at (0:3.3cm)[right]{$\{0,1\}$};	
	\node at (360/9:3.3cm)[right]{$\{1,2\}$};	
	\node at (2*360/9:3.4cm)[above]{$\{2,3\}$};	
	\node at (3*360/9:3.4cm)[above]{$\{3,4\}$};
	\node at (4*360/9:3.3cm)[left]{$\{4,5\}$};
	\node at (5*360/9:3.3cm)[left]{$\{5,6\}$};
	\node at (6*360/9:3.35cm)[left]{$\{6,7\}$};
	\node at (7*360/9:3.45cm)[right]{$\{7,8\}$};
	\node at (8*360/9:3.3cm)[right]{$\{8,0\}$};

	\draw[-latex,line width=1pt,black] (v0) -- (v3);
	\draw[-latex,line width=1pt,black] (v1) -- (v4);
	\draw[-latex,line width=1pt,black] (v2) -- (v5);
	\draw[-latex,line width=1pt,black] (v3) -- (v6);
	\draw[-latex,line width=1pt,black] (v4) -- (v7);
	\draw[-latex,line width=1pt,black] (v5) -- (v8);
	\draw[-latex,line width=1pt,black] (v6) -- (v0);
	\draw[-latex,line width=1pt,black] (v7) -- (v1);
	\draw[-latex,line width=1pt,black] (v8) -- (v2);

	\draw[-latex,line width=1pt,black] (v0) -- (v2);
	\draw[-latex,line width=1pt,black] (v1) -- (v3);
	\draw[-latex,line width=1pt,black] (v2) -- (v4);
	\draw[-latex,line width=1pt,black] (v3) -- (v5);
	\draw[-latex,line width=1pt,black] (v4) -- (v6);
	\draw[-latex,line width=1pt,black] (v5) -- (v7);
	\draw[-latex,line width=1pt,black] (v6) -- (v8);
	\draw[-latex,line width=1pt,black] (v7) -- (v0);
	\draw[-latex,line width=1pt,black] (v8) -- (v1);

 	\end{tikzpicture}
 	\caption{The oriented graph $T_l$ for $l = 1$.}
 	\label{figure:sample}
 \end{figure}

Next we will show that $T_l$ is $n$-nice.

\begin{lemma}
The oriented graph $T_l$ is $n$-nice. 
\end{lemma}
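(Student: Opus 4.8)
The plan is to track how the two neighborhood operations act at the level of indices in $\mathbb{Z}/n\mathbb{Z}$ and to show that each application of $N^{+}$ or $N^{-}$ enlarges a cyclic interval of indices by exactly one. First I would record the index-level description of the operations: for any index set $S \subseteq \mathbb{Z}/n\mathbb{Z}$, writing $X_S = \{x_s : s \in S\}$, we have $N^{+}(X_S) = X_{S + \{m,\,m+1\}}$ and $N^{-}(X_S) = X_{S - \{m,\,m+1\}}$. This is immediate from the description of $A(T_l)$: the out-neighbors of $x_i$ are $x_{i+m}$ and $x_{i+m+1}$, and its in-neighbors are $x_{i-m}$ and $x_{i-m-1}$. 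The crucial feature, which I would emphasize, is that in both cases the two neighbors carry \emph{consecutive} indices.

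The key lemma I would isolate is the following: if $I \subseteq \mathbb{Z}/n\mathbb{Z}$ is a cyclic interval (a set of consecutive residues) of size $\ell$, then both $N^{+}(X_I)$ and $N^{-}(X_I)$ equal $X_J$ for a cyclic interval $J$ of size $\min(\ell+1, n)$. Writing $I = [a,b]$, one computes $I + \{m, m+1\} = [a+m, b+m]\cup[a+m+1, b+m+1] = [a+m, b+m+1]$, and similarly $I - \{m, m+1\} = [a-m-1, b-m]$. In each case the two translates overlap in a block of length $\ell - 1$ and glue into a single interval of length $\ell + 1$ (capped at $n$, at which point the set is all of $\mathbb{Z}/n\mathbb{Z}$). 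This is exactly where the consecutiveness of $\{m, m+1\}$ is used: it guarantees that the union of the two translates leaves no gap and so stays a single arc.

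With this lemma in hand, the theorem follows by a short induction. Starting from any single vertex $x_i$, which is a cyclic interval of size $1$, I would show by induction on $j$ that applying the innermost $j$ operations $N^{\alpha_n}, N^{\alpha_{n-1}}, \ldots$ of any $\alpha \in \{+,-\}^n$ produces the vertex set of a cyclic interval of size $\min(j+1, n)$. The sign of each coordinate is irrelevant, since by the lemma \emph{both} $N^{+}$ and $N^{-}$ increase the interval size by one until saturation. Taking $j = n$, so that all $n$ coordinates of $\alpha$ are applied, the size is $\min(n+1, n) = n$, whence $N^{\alpha}(x_i) = V(T_l)$. As $x_i$ and $\alpha$ were arbitrary, $T_l$ is $n$-nice.

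The only place that requires genuine care is the interval lemma itself: verifying that the two index-translates merge into a single arc rather than into two arcs separated by a gap, and handling the wraparound correctly once the interval first reaches the full size $n$. Both points reduce to the elementary facts that $\{m, m+1\}$ are consecutive and that a quantity growing monotonically by $1$ from $1$ reaches $n$ within $n-1$ steps, so that the full length $n$ is certainly attained after all $n$ coordinates of $\alpha$ have been applied.
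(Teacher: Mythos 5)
Your proof is correct and follows essentially the same route as the paper's: both arguments rest on the observation that the out- (or in-) neighborhood of a set of vertices with consecutive indices is again a set of consecutive indices of size one larger, so that iterating $n$ times from a single vertex saturates to all of $V(T_l)$. Your version is somewhat more careful in spelling out why the two index-translates by $m$ and $m+1$ merge into a single cyclic interval and in capping the size at $n$, which cleanly handles the final step where the paper's sketch informally passes from an $(n-1)$-fold to an $n$-fold application.
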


\begin{proof}
Observe that each vertex $x_i$ of $T_l$ has exactly $2$ out-neighbors $x_{i+m} , x_{i+m+1}$ and 
exactly $2$ in-neighbors $x_{i-m}, x_{i-m-1}$. Therefore, 
given a set $S = \{x_i, x_{i+1}, \cdots, x_{i+m-1}\}$  of $t$ consecutive neighbors of 
$T_l$, we have $|N^+(S)|=|N^-(S)|=t+1$ for all $t < n$. Thus, for any vertex $x_i$ and any 
$(n-1)$-vector $\alpha$, we will have $|N^{\alpha}(x_i)| = n$. Hence $N^{\alpha}(x_i) = V(T_l)$. 
\end{proof}

Therefore using the above lemma and Proposition~\ref{prop nice}  we can conclude 
the following.

\begin{lemma}\label{lem girth-nice}
Any $G \in \mathcal{P}_{5 \cdot n -1}$ admits a homomorphism to $T_l$ where $n = 2^{l+2} + 1$.
\end{lemma}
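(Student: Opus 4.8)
The plan is to invoke Proposition~\ref{prop nice} directly; this lemma is an immediate instantiation of it. The preceding lemma establishes that the oriented graph $T_l$ is $n$-nice, where $n = 2^{l+2}+1$. Thus, setting $k = n$ and $T = T_l$ in Proposition~\ref{prop nice}, the hypothesis that $T$ is $k$-nice is satisfied, and the proposition yields that every $G \in \mathcal{P}_{5k-1}$ admits a homomorphism to $T_l$. Since $5k - 1 = 5n - 1$, this is exactly the assertion to be proved.

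First I would recall the value $n = 2^{l+2} + 1$ and the conclusion of the previous lemma, namely that $N^{\alpha}(x_i) = V(T_l)$ for every vertex $x_i$ and every $(n-1)$-vector $\alpha \in \{+,-\}^{n-1}$, which is precisely the definition of $n$-niceness. Then I would feed $T_l$ into Proposition~\ref{prop nice} as the target graph $T$, with the niceness parameter $k$ taken to be $n$. The proposition then directly produces, for every oriented planar graph of girth at least $5k-1$, a homomorphism to $T_l$; rewriting $5k-1$ as $5n-1$ and observing that $\mathcal{P}_{5n-1}$ is by definition the family of such graphs completes the argument.

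I expect no genuine obstacle at this step, since all the substantive work has already been carried out in verifying the $n$-niceness of $T_l$: there the essential observation was that a set $S$ of $t$ consecutive vertices satisfies $|N^+(S)| = |N^-(S)| = t+1$ for all $t < n$, which propagates to full coverage of $V(T_l)$ after $n-1$ neighborhood operations. The only care required here is bookkeeping---correctly identifying the parameter $k$ of Proposition~\ref{prop nice} with $n$, and confirming that the girth threshold $5k-1$ instantiates to $5n-1$. No further estimates, constructions, or case analysis are needed.
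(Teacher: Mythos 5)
Your proposal is correct and is essentially identical to the paper's own argument: the paper likewise obtains this lemma as an immediate corollary of the preceding lemma (that $T_l$ is $n$-nice) combined with Proposition~\ref{prop nice} instantiated at $k=n$. No further comment is needed.
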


Now we are ready to prove Theorem~\ref{th epsilon-girth}. 

\medskip

\noindent  \textit{Proof of Theorem~\ref{th epsilon-girth}}
Note that a directed cycle $C_r$ of length $r = 5 \cdot 2^{n+1}$ belongs to 
$\mathcal{P}_{5 \cdot n -1}$. Also, $\beta(r) = 0$. Thus according to Theorem~\ref{directed-cycles}(d), 
$\chi_o^*(C_r) = 4$. This implies the lower bound. 
We have independently proved Theorem~\ref{directed-cycles}(d) in the next section.

Next we are going to prove the upper bound. 
Note that each vertex of $T_l$ corresponds to an $m$-tuple. where $m = 2^{l}$. 
That means if we consider the elements of the tuples to be colors, then the tuple corresponding to $x_i$ can be viewed as set of $m$-colors assigned to $x_i$. Therefore, the tuples corresponding to the vertices of $T_l$ is a potential $m$-fold oriented $n$-coloring of $T_l$.

We are going to verify if the tuples indeed provide an $m$-fold oriented $n$-coloring of $T_l$. 
To verify the first condition of $m$-fold oriented coloring, we observe that 
any two adjacent vertices of $T_l$ corresponds to tuples that have disjoint elements. To be precise, 
an arc of $T_l$ can be of the two following types: $x_ix_{i+m}$ and $ x_ix_{i+m+1}$. 
Clearly, the elements of $x_i = (i, i+1, \cdots, i+m-1)$ is different from the elements of 
$x_{i+m} = (i+m, i+m+1, \cdots, i+2m-1)$ and $x_{i+m+1} = (i+m+1, i+m+2, \cdots, i+2m)$.

To verify the second condition, let us assume that the condition is violated by the set of arcs 
$x_{i_1}x_{i_2}$ and $x_{j_1}x_{j_2}$. In order for such a violation to happen, we must have 
at least one common element $s$
between the tuples corrsponding to $x_{i_1}, x_{j_2}$ and
at least one common element $t$
between the tuples corrsponding to $x_{i_2}, x_{j_1}$.

That $s$ is an element of $x_{i_1}$ and $t$ is an element of $x_{i_2}$ implies 
$t = s+l$ where $1 \leq l \leq 2m$. 
Similarly, as $t$ is an element of $x_{j_1}$ and $s$ is an element of $x_{j_2}$, we have  
$s = t+l'$ where $1 \leq l' \leq 2m$. Therefore, $s + (l+l') = s$ where $2 \leq (l+l') \leq 4m$. This is a contradiction as we are working 
 modulo $(4m+1)$. 

Hence, $T_l$ admits an $m$-fold oriented $n$-coloring. Therefore, any graph that admits a homomorphism to $T_l$ also admits an $m$-fold oriented $n$-coloring.  
Thus if $G$ admits a homomorphism to $T_l$, then 
$$\chi^*_o(G) \leq \frac{|V(T_l)|}{m} = \frac{n}{m} =\frac{2^{l+2}+1}{2^{l}}=4+\frac{1}{2^{l}}.$$ Thus we are done as 
$\lim_{l \to \infty}  \frac{1}{2^{l}} =0$. 
 \hfill $ \square $

\section{Conclusions}\label{sec conclusions}
This article initiates the study on fractional  oriented  chromatic number. The works of this article naturally motivates the following problems. 
\begin{enumerate}[(1)]
\item Find the linear programming formulation of the notion of fractional  oriented  chromatic number. 

\item Find out the fractional  oriented chromatic numbers of all oriented cycles. 

\item Find out the fractional  oriented chromatic numbers of all the families of oriented planar graphs having girth at least $g$, for all $g \geq 3$.

\item Study the complexity dichotomy of finding the parameter.
\end{enumerate}

\bibliographystyle{abbrv}
\bibliography{TOCReferences,NSS14}

\begin{thebibliography}{1}

\bibitem{courcelle1994monadic}
B.~Courcelle.
\newblock The monadic second order logic of graphs vi: On several
  representations of graphs by relational structures.
\newblock {\em Discrete Applied Mathematics}, 54(2):117--149, 1994.

\bibitem{nice}
P.~Hell, A.~V. Kostochka, A.~Raspaud, and {\'E}.~Sopena.
\newblock On nice graphs.
\newblock {\em Discrete Mathematics}, 234(1--3):39--51, 2001.

\bibitem{nandySS2016}
A.~Nandy, S.~Sen, and {\'{E}}.~Sopena.
\newblock Outerplanar and planar oriented cliques.
\newblock {\em Journal of Graph Theory}, 82(2):165--193, 2016.

\bibitem{raspaud1994planar80}
A.~Raspaud and {\'E}.~Sopena.
\newblock Good and semi-strong colorings of oriented planar graphs.
\newblock {\em Information Processing Letters}, 51(4):171--174, 1994.

\bibitem{scheinermann2008fgt}
E.~R. Scheinermann and D.~H. Ullman.
\newblock {\em Fractional Graph Theory: A Rational Approach to the Theory of
  Graphs}.
\newblock 2008.

\bibitem{sopena-updated-survey}
{\'E}.~Sopena.
\newblock Homomorphisms and colourings of oriented graphs: An updated survey.
\newblock {\em Discrete Mathematics}, 339(7):1993--2005, 2016.

\end{thebibliography}

\end{document}